\numberwithin{equation}{section}
\newcommand{\rleft}{\mathopen{}\mathclose\bgroup\left}
\newcommand{\rright}{\aftergroup\egroup\right}
\newtheorem{theorem}{Theorem}[section]
\newtheorem{lemma}[theorem]{Lemma}
\newtheorem{prop}[theorem]{Proposition}
\newtheorem{cor}[theorem]{Corollary}
\newtheorem{conj}[theorem]{Conjecture}
\theoremstyle{definition}
\newtheorem{definition}[theorem]{Definition}
\newtheorem{remark}[theorem]{Remark}
\newtheorem{example}[theorem]{Example}
\numberwithin{equation}{section}
\DeclareMathOperator{\rank}{rank}
\DeclareMathOperator{\vol}{vol}
\newcommand{\st}{\operatorname{str}}
\newcommand{\alg}{\operatorname{alg}}
\newcommand{\Lv}{\lambda}
\newcommand{\Cd}{\mathds{C}}
\newcommand{\Qd}{\mathds{Q}}
\newcommand{\Nd}{\mathds{N}}
\newcommand{\Zd}{\mathds{Z}}
\newcommand{\Pd}{\mathds{P}}
\newcommand{\Ld}{\mathds{L}}
\newcommand{\Ad}{\mathds{A}}
\newcommand{\Z}{\Zd}
\newcommand{\Q}{\Qd}
\newcommand{\C}{\Cd}
\newcommand{\Vm}{\mathcal{V}}
\newcommand{\Fm}{\mathcal{F}}
\newcommand{\Mm}{\mathcal{M}}
\newcommand{\ie}{i.\,e., }
\begin{document}

\title{On the algebraic stringy Euler number}

\author{Victor Batyrev}
\address{Fachbereich Mathematik, Universit\"at
  T\"ubingen, Auf der Morgenstelle 10, 72076 T\"ubingen, Germany}
\email{batyrev@math.uni-tuebingen.de}

\author{Giuliano Gagliardi}
\address{Institut f\"ur Algebra,
  Zahlentheorie und Diskrete Mathematik, Leibniz Universit\"at
  Hannover, Welfengarten 1, 30167 Hannover, Germany}
\email{gagliardi@math.uni-hannover.de}

\subjclass[2010]{Primary 14E30; Secondary 14E15, 14E18, 14L30, 14M27}

\begin{abstract}
  We are interested in stringy invariants of singular projective
  algebraic varieties satisfying a strict monotonicity with respect to
  elementary birational modifications in the Mori program. We
  conjecture that the algebraic stringy Euler number is one of such
  invariants. In the present paper, we prove this conjecture for
  varieties having an action of a connected algebraic group $G$ and
  admitting equivariant desingularizations with only finitely many
  $G$-orbits. In particular, we prove our conjecture for arbitrary
  projective spherical varieties.
\end{abstract}

\maketitle

\section{Introduction}

Let $X$ be a smooth projective algebraic variety over $\C$. The
dimension $b^i_{\rm top}(X)$ of the $i$-th cohomology group
$H^{i}(X, \C)$ is called the $i$-th \emph{topological Betti number} of
$X$. The \emph{topological Betti polynomial} of $X$ is defined as
\[ B_{\rm top}(X,t) \coloneqq \sum_{i =0}^{2\dim X} (-1)^i b^i_{\rm
  top}(X) t^i\text{.} \]
Its value
\[ e_{\rm top}(X) \coloneqq B_{\rm top}(X,1) = \sum_{i =0}^{2\dim X}
(-1)^i b^i_{\rm top}(X) \]
is the {\em topological Euler number} of $X$. Let
$H_{\rm alg}^{2i}(X, \C)$ be the $\C$-subspace in $H^{2i}(X, \C)$
generated by the classes $[Z]$ of algebraic cycles $Z \subseteq X$ of
codimension $i$. We call the dimension $b_{\rm alg}^{2i}(X)$ of the
$\C$-space $H_{\rm alg}^{2i}(X, \C)$ the $2i$-th \emph{algebraic Betti
  number} of $X$. This naturally leads us to the \emph{algebraic Betti
  polynomial}
\[ B_{\rm alg}(X,t) \coloneqq \sum_{i =0}^{\dim X} b_{\rm alg}^{2i}(X)
t^{2i}\text{.} \]
Its value
\[ e_{\rm alg}(X) \coloneqq B_{\rm alg}(X,1) = \sum_{i =0}^{\dim X}
b_{\rm alg}^{2i}(X) \]
we call the \emph{algebraic Euler number} of $X$. Using the hard
Lefschetz theorem, one can easily show that
$e_{\rm alg}(X) \geq \dim X + 1$. In particular, the algebraic Euler
number $e_{\rm alg}(X)$ of a smooth projective variety $X$ is always a
positive integer.

In order to consider stringy versions of the above invariants for
singular varieties $X$ we need some additional notation.

Let $X$ be a normal projective algebraic variety over $\C$ and let
$K_X$ be its canonical class. A birational morphism
$\rho \colon Y \to X$ is called a \emph{log-desingularization} of $X$
if $Y$ is smooth and the exceptional locus of $\rho$ consists of
smooth irreducible divisors $D_1, \ldots, D_k$ with simple normal
crossings. Assume that $X$ is $\Q$-Gorenstein, \ie some integral
multiple of $K_X$ is a Cartier divisor on $X$. We set
$I \coloneqq \{ 1, \ldots, k\}$, $D_{\emptyset}\coloneqq Y$, and for
any nonempty subset $J \subseteq I$ we denote by $D_J$ the
intersection of divisors $\bigcap_{j \in J} D_j$, which is either
empty or a smooth projective subvariety in $Y$ of codimension $|J|$.
Then the canonical classes of $X$ and $Y$ are related by the formula
\[ K_Y = \rho^* K_X + \sum_{i=1}^k a_i D_i\text{,} \]
where $a_1, \ldots, a_k$ are rational numbers which are called
\emph{discrepancies}. The singularities of $X$ are said to be
\emph{log-terminal} if $a_i>-1$ for all $i \in I$.

For a projective algebraic variety with at worst $\Q$-Gorenstein
log-terminal singularities one defines the \emph{topological stringy
  Betti function}
\[ B^{\rm str}_{\rm top}(X, t) \coloneqq \sum_{\emptyset \subseteq J
  \subseteq I} B_{\rm top}(D_J, t)\prod_{j \in J}
\left(\frac{t^2-1}{t^{2(a_j+1)} -1} -1 \right) \]
and the \emph{algebraic stringy Betti function}
\[ B^{\rm str}_{\rm alg}(X, t)\coloneqq \sum_{\emptyset \subseteq J
  \subseteq I} B_{\rm alg}(D_J, t)\prod_{j \in J}
\left(\frac{t^2-1}{t^{2(a_j+1)} -1} -1 \right)\text{.} \]
In particular, one obtains the \emph{topological stringy Euler number}
\[ e^{\rm str}_{\rm top}(X) \coloneqq \lim_{t \to 1} B^{\rm str}_{\rm
  top}(X, t) = \sum_{\emptyset \subseteq J \subseteq I} e_{\rm
  top}(D_J)\prod_{j \in J} \left( \frac{-a_j}{a_j+1}\right) \]
and the \emph{algebraic stringy Euler number}
\[ e^{\rm str}_{\rm alg}(X) \coloneqq \lim_{t \to 1} B^{\rm str}_{\rm
  alg}(X, t) = \sum_{\emptyset \subseteq J \subseteq I} e_{\rm
  alg}(D_J)\prod_{j \in J} \left( \frac{-a_j}{a_j+1}
\right)\text{.} \]
It is well-known that the definition of $B^{\rm str}_{\rm top}(X, t)$
and $e^{\rm str}_{\rm top}(X)$ does not depend on the choice of the
log-desingularization $\rho$ (see \cite{Bat-Sing}). The proof of this
statement uses a non-archimedean integration and the fact that the
Betti polynomial $B(X, t)$ can be extended to an additive function on
the category ${\Vm}_\Cd$ of algebraic varieties over $\Cd$ having the
multiplicativity property
\[ B_{\rm top}(X_1 \times X_2,t) = B_{\rm top}(X_1,t) \cdot B_{\rm
  top}(X_2,t)\;\; \forall X_1,X_2 \in {\Vm}_\Cd\text{.} \]

We note that the algebraic Betti polynomial $B_{\rm alg}(X,t)$ can
also be extended to an additive function on the category of arbitrary
algebraic varieties, but the above multiplicativity property fails for
$B_{\rm alg}(X,t)$ in general. Nevertheless, it was noticed in
\cite{teh09} that the weaker multiplicativity property
\[ B_{\rm alg}(X \times \Pd^1,t) =B_{\rm alg}(X,t) \cdot B_{\rm
  alg}(\Pd^1,t)\;\; \forall X \in {\Vm}_\Cd\text{} \]
is sufficient in order to define stringy invariants. In particular,
the algebraic stringy Betti function $B^{\rm str}_{\rm alg}(X, t)$ and
the algebraic stringy Euler number $e^{\rm str}_{\rm alg}(X)$ do not
depend on the choice of the log-desingularization $\rho$.

The algebraic stringy Euler number $e^{\rm str}_{\rm alg}(X)$ of a
singular variety $X$ is usually a rational number. Our first
conjecture claims the positivity of the algebraic stringy Euler
number:

\begin{conj}
  \label{conj1}
  Let $X$ be a projective algebraic variety with at worst
  $\Qd$-Gorenstein log-terminal singularities. Then
  $e^{\rm str}_{\rm alg}(X) >0$.
\end{conj}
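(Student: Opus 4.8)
The principled route is to establish the stronger monotonicity statement hinted at in the abstract and to derive positivity from it through the Minimal Model Program. The plan is to prove that $e^{\rm str}_{\rm alg}$ is \emph{strictly decreasing} along downward elementary modifications of the $K_X$-MMP: if $\phi\colon X \dashrightarrow X'$ is a divisorial contraction or a flip in a run of the program, then $e^{\rm str}_{\rm alg}(X) > e^{\rm str}_{\rm alg}(X')$. (The simplest sanity check is the blow-down $\Bl_p \Pd^2 \to \Pd^2$, an instance of a divisorial contraction of a divisor with positive discrepancy, where the invariant drops from $4$ to $3$.) Granting this, I would first replace $X$ by a $\Q$-factorial terminalization $\psi\colon X' \to X$; since $\psi$ is crepant, it shares log-desingularizations and discrepancy data with $X$, so $e^{\rm str}_{\rm alg}(X') = e^{\rm str}_{\rm alg}(X)$ by the independence of the log-desingularization. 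Running the MMP from $X'$ then terminates, after finitely many downward steps, at either a minimal model with nef canonical class or a Mori fibre space, and the monotonicity gives $e^{\rm str}_{\rm alg}(X) \ge e^{\rm str}_{\rm alg}(\text{endpoint})$. It therefore suffices to prove positivity at the two endpoints.

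For a Mori fibre space $f\colon X' \to Z$ I would induct on dimension. The general fibre $F$ is of Fano type, and for such $F$ positivity of $e^{\rm str}_{\rm alg}(F)$ is the most accessible case: on a log-resolution of a Fano-type variety the relevant discrepancy factors $\frac{-a_j}{a_j+1}$ tend to be favourable, and combined with the hard Lefschetz bound $e_{\rm alg}(\cdot) \ge \dim(\cdot)+1 > 0$ on every stratum one expects a clean positivity. A stringy stratification formula over $Z$ would then assemble $e^{\rm str}_{\rm alg}(X')$ from the positive fibrewise contributions and, by the inductive hypothesis, the positive invariant of the lower-dimensional base $Z$. For the minimal-model endpoint, with $K_{X'}$ nef, I would pass (assuming abundance) to the canonical model and reduce via the Iitaka fibration to the case $K \equiv 0$; there the variety is of Calabi--Yau or abelian type, and any crepant resolution yields $e^{\rm str}_{\rm alg} = e_{\rm alg} > 0$ directly, while in the absence of a crepant resolution one again falls back on the stratum-wise bound $e_{\rm alg} \ge 1$.

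I expect the decisive obstacle to be the strict monotonicity under \emph{flips}. For a flip $X \dashrightarrow X^{+}$ the flipping and flipped loci have codimension $\ge 2$, so the modification is invisible to divisors and the change in $e^{\rm str}_{\rm alg}$ is governed entirely by the algebraic Betti realizations of the exceptional strata and their discrepancies. Controlling this change requires understanding the classes of algebraic cycles on these strata, and precisely here the failure of full multiplicativity of $B_{\rm alg}$---only the weak $\Pd^1$-multiplicativity of \cite{teh09} is at our disposal---blocks the product decomposition that makes the analogous statement for $e^{\rm str}_{\rm top}$ tractable. This is exactly the point at which the $G$-equivariant hypothesis of the paper intervenes: on an equivariant desingularization with only finitely many $G$-orbits every stratum is a single orbit, hence a homogeneous space whose algebraic cohomology is explicit, the MMP steps are encoded combinatorially, and the sign cancellations in $\sum_{\emptyset \subseteq J \subseteq I} e_{\rm alg}(D_J)\prod_{j \in J}\frac{-a_j}{a_j+1}$ can be tracked term by term. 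Without such finiteness the strata are arbitrary smooth varieties and the monotonicity under flips---and therefore the conjecture in full generality---remains out of reach.

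Finally, it is worth recording why no purely elementary estimate replaces this structural argument. Although each factor $e_{\rm alg}(D_J)$ is strictly positive, the products $\prod_{j \in J}\frac{-a_j}{a_j+1}$ change sign---being negative exactly when an odd number of the $a_j$ are positive---and for canonical singularities all discrepancies are nonnegative, so heavy cancellation among the summands is unavoidable and no crude bound on the negative terms can close the argument. It is this cancellation that forces the appeal to the monotonicity of $e^{\rm str}_{\rm alg}$ under the Mori program rather than a direct manipulation of the defining sum.
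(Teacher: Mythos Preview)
The statement you are attempting to prove is a \emph{conjecture}; the paper does not prove it in general, only in the special case where $X$ admits a $G$-equivariant log-desingularization with finitely many $G$-orbits (Theorem~\ref{th:fo-0}). So there is no general proof to compare against, and your own proposal correctly concludes that the approach via MMP monotonicity ``remains out of reach'' without the equivariant hypothesis. Even within your sketch, the endpoint analysis is not sound: for a Mori fibre space there is no ``stringy stratification formula'' assembling $e^{\rm str}_{\rm alg}(X')$ from fibre and base, precisely because $B_{\rm alg}$ fails full multiplicativity (only the $\Pd^1$-multiplicativity of \cite{teh09} is available), and for the minimal-model endpoint you invoke abundance and a crepant-resolution argument that does not cover the general log-terminal case. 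The $\Q$-factorial terminalization and termination of flips you need in arbitrary dimension are also not established in the generality required.

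More importantly, your final paragraph overlooks the reformulation that drives the paper's argument in the equivariant case. You work with the closed-stratum expression $\sum_{J} e_{\rm alg}(D_J)\prod_{j \in J}\frac{-a_j}{a_j+1}$, whose coefficients indeed change sign; but the equivalent open-stratum expression
\[
e^{\rm str}_{\rm alg}(X) \;=\; \sum_{\emptyset \subseteq J \subseteq I} e_{\rm alg}(D_J^\circ)\prod_{j \in J}\frac{1}{a_j+1}
\]
has \emph{all} coefficients $\frac{1}{a_j+1}>0$ by log-terminality. The paper's proof of Theorem~\ref{th:fo-0} is then entirely direct: each $D_J^\circ$ is a finite union of $G$-orbits, hence $e_{\rm alg}(D_J^\circ)=e_{\rm top}(D_J^\circ)\ge 0$ by Proposition~\ref{prop:egh}, and a closed (projective) stratum contributes strictly positively. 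No MMP, no monotonicity, no endpoint analysis is used to obtain positivity; the monotonicity statements (Theorems~\ref{th:fo-flip} and~\ref{th:fo-divc}) are proved separately by the same open-stratum mechanism, not as a tool for Conjecture~\ref{conj1}. The genuine obstruction to the general conjecture is therefore not sign cancellation but the possible negativity of $e_{\rm alg}(D_J^\circ)$ for non-homogeneous open strata.
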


\begin{remark}
  It is natural to expect an even stronger inequality:
  \[ e^{\rm str}_{\rm alg}(X) \geq \dim X +1\text{.} \]
\end{remark}

Now we consider some relations between the stringy Euler numbers and
birational geometry.

Recall that two smooth birationally isomorphic projective varieties
$X_1$ and $X_2$ are called $K$-equivalent if there exists a birational
isomorphism $f\colon X_1 \dasharrow X_2$ and a smooth projective
variety $Y$ together with two birational morphisms
$\rho_1\colon Y \to X_1$ and $\rho_2\colon Y \to X_2$ in a commutative
diagram \[
\xymatrix{   & Y\ar[ld]_{\rho_1} \ar[rd]^{\rho_2} & \\
  X_1 \ar@{-->}[rr]^f & & X_2} \]
such that the pullbacks of the canonical classes of $X_1$ and $X_2$
are the same: $\rho_1^*(K_{X_1}) = \rho_2^*(K_{X_2})$. Using methods
of non-archimedean integration, one can show that $K$-equivalent
algebraic varieties must have the same Betti polynomials:
$B_{\rm top}(X_1,t) = B_{\rm top}(X_2,t)$. In particular, two
birationally isomorphic Calabi-Yau manifolds must have the same Betti
numbers (see \cite{Bat-CY}).

The notion of $K$-equivalence immediately extends to algebraic
varieties with at worst $\Q$-Gorenstein singularities. Using the same
method based on non-archimedean integration, one can prove that two
$K$-equivalent projective varieties $X_1$ and $X_2$ with at worst
log-terminal singularities must have the same stringy Betti functions:
$B^{\rm str}_{\rm top}(X_1,t) = B^{\rm str}_{\rm top}(X_2,t)$. In
particular, one has the equality
$e^{\rm str}_{\rm top}(X_1)= e^{\rm str}_{\rm top}(X_2)$ for the
stringy Euler numbers if $X_1$ and $X_2$ are $K$-equivalent.

\begin{definition}
  A proper birational morphism $f\colon X \to X'$ of two
  $\Q$-Gorenstein varieties $X$ and $X'$ is called a \emph{divisorial
    Mori contraction} if $f$ contracts a divisor $D \subseteq X$ and
  the anticanonical divisor $-K_X$ is $f$-ample.
\end{definition}

\begin{definition}
  A birational morphism $g \colon X \dashrightarrow X^+$ of two
  $\Q$-Gorenstein varieties $X$ and $X^+$ together with the birational
  morphisms $f\colon X \to Z$ and $f^+\colon X^+ \to Z$ in a
  commutative diagram \[ \xymatrix{
    X \ar@{-->}[rr]^g\ar[rd]_f&& X^+ \ar[ld]^{f^+}\\
    & Z& } \]
  is called a \emph{Mori flip} if $g$ is an isomorphism in codimension
  one, the anticanonical class $-K_X$ is $f$-ample, and the canonical
  class $K_{X^+}$ is $f^+$-ample.
\end{definition}

We conjecture the following strict monotonicity of the algebraic
stringy Euler number with respect to the above elementary birational
transformations in the Mori program:

\begin{conj}
  \label{conj2a}
  Let $f \colon X \to X'$ be a divisorial Mori contraction. Then
  \[ e^{\rm str}_{\rm alg}(X) > e^{\rm str}_{\rm alg}(X') \text{.} \]
\end{conj}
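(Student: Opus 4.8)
The plan is to reduce both stringy Euler numbers to a single stratification of one common log-desingularization and then compare them weight by weight. First I would choose a log-desingularization $\rho\colon Y \to X$ such that the composite $\psi \coloneqq f \circ \rho \colon Y \to X'$ is simultaneously a log-desingularization of $X'$; this is possible because $f$ is birational, so after finitely many further blow-ups the exceptional loci of both $\rho$ and $\psi$ become simple normal crossing divisors. Writing $E_1,\dots,E_k$ for the $\rho$-exceptional divisors and $E_0$ for the strict transform of the contracted divisor $D$, the family $E_0,E_1,\dots,E_k$ is an SNC divisor on $Y$ whose closed strata $E_J = \bigcap_{j\in J}E_j$ are smooth and projective. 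Expanding each $e_{\rm alg}(E_J)$ by additivity over the open strata $E_K^{\circ} \coloneqq E_K \setminus \bigcup_{i\notin K} E_i$ and summing the resulting geometric series (using $1+\frac{-a_k}{a_k+1}=\frac{1}{a_k+1}$), the stringy Euler number acquires the stratified form
\[ e^{\rm str}_{\rm alg}(X) = \sum_{K} e_{\rm alg}\!\left(E_K^{\circ}\right)\prod_{k\in K}\frac{1}{a_k+1}, \]
and likewise for $X'$ with discrepancies $a'_k$. Since the strict transform $E_0$ of $D$ has discrepancy $0$ over $X$, it may be adjoined to the resolution data for $X$ without affecting $e^{\rm str}_{\rm alg}(X)$: in the stratified form it simply enters with the trivial weight $1/(a_0+1)=1$.

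The next step is the discrepancy comparison. As $-K_X$ is $f$-ample and $D$ is contracted, restricting $K_X = f^{*}K_{X'} + a_D D$ to a curve $C$ contracted by $f$ gives $a_D\,(D\cdot C)=K_X\cdot C<0$ with $D\cdot C<0$, so the discrepancy $a_D$ of $D$ over $X'$ is strictly positive. Pulling the same relation back by $\rho$ and writing $\rho^{*}D = E_0 + \sum_{i\geq 1} m_i E_i$ with $m_i \geq 0$, I would match the two expressions for $K_Y$ to obtain
\[ a'_0 = a_D > 0 = a_0, \qquad a'_i = a_i + a_D\,m_i \geq a_i \quad (i \geq 1). \]
Hence $1/(a'_k+1) \leq 1/(a_k+1)$ for every $k$, so each weight difference $c_K \coloneqq \prod_{k\in K}\frac{1}{a_k+1} - \prod_{k\in K}\frac{1}{a'_k+1}$ is nonnegative, and it is strictly positive whenever $0\in K$, because then the strict factor $1/(a'_0+1)=1/(a_D+1)<1$ appears. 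The difference of the two invariants then collapses to the single sum
\[ e^{\rm str}_{\rm alg}(X) - e^{\rm str}_{\rm alg}(X') = \sum_{K} e_{\rm alg}\!\left(E_K^{\circ}\right) c_K, \]
in which every weight is nonnegative and those indexed by sets containing $0$ are strictly positive.

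The main obstacle is that the algebraic Euler numbers of the open strata $E_K^{\circ}$ need not be nonnegative: already $e_{\rm alg}(\Pd^1\setminus\{0,1,\infty\}) = 2-3 = -1$, so the term-by-term inequality cannot hold for arbitrary strata, and this is exactly where further geometry must enter. The $f$-ampleness of $-K_X$ does guarantee that the fibres of $f|_D$ over the centre $f(D)$ are positive-dimensional Fano varieties, which have positive Euler numbers; but since $e_{\rm alg}$ is multiplicative only for products with $\Pd^1$ and not for general fibrations, Fano fibres alone do not force $e_{\rm alg}(E_K^{\circ})\geq 0$. To close the argument one needs a stratification in which each $E_K^{\circ}$ is assembled from pieces of controlled algebraic Euler number — for instance orbits of a connected group acting with only finitely many orbits, or more generally strata admitting cellular (affine-space) decompositions — so that the strictly positive contributions of the strata meeting $E_0$ dominate. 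Establishing this positivity of the strata, rather than the purely formal comparison above, is the heart of the matter and the reason the unconditional statement remains a conjecture.
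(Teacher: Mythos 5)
You were asked to prove what is, in the paper, an open conjecture: Conjecture~\ref{conj2a} is never proven there unconditionally, but only under an additional equivariant hypothesis (Theorem~\ref{th:fo-divc}, feeding into Theorem~\ref{spher}). Judged against that, your proposal is an accurate reconstruction of both the part that can be done and the part that cannot. Your formal reduction --- a common log-desingularization $Y$ of $X$ and $X'$, the open-stratum form of the stringy Euler number, adjoining the strict transform $E_0$ of the contracted divisor with trivial weight over $X$, and the discrepancy comparison $a'_0=a_D>0=a_0$, $a'_i=a_i+a_D m_i\ge a_i$ --- is exactly the skeleton of the paper's proof of Theorem~\ref{th:fo-divc}; the paper imports the inequalities $a_j\le a'_j$ (strict for some $j_0$) from \cite[Lemma~3.4]{is05}, while you derive them directly from the $f$-ampleness of $-K_X$ and the negativity of $D$ on contracted curves, which is a legitimate, self-contained alternative.

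The gap you flag is the real one, and your diagnosis of how to fill it is precisely what the paper does. Since $e_{\rm alg}$ of an open stratum can be negative (your example $e_{\rm alg}(\Pd^1\setminus\{0,1,\infty\})=-1$ is apt), the termwise comparison cannot close for arbitrary $X$. The paper's extra hypothesis is that $X$ carries an action of a connected algebraic group $G$ and admits a $G$-equivariant log-desingularization $Y$ with only finitely many $G$-orbits; then every open stratum $D_J^\circ$ is a finite union of orbits $G/H$, and by the results of Brion and Peyre (Proposition~\ref{prop:egh} together with the equality $B_{\rm alg}(G/H;t)=B_{\rm top}(G/H;t)$) one gets $e_{\rm alg}(D_J^\circ)\ge 0$, with strict positivity for any nonempty closed stratum $D_J^\circ=D_J$, since such a stratum contains a projective $G$-orbit; choosing such a stratum with $j_0\in J$ yields the strict inequality. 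So your closing remark --- that strata assembled from orbits of a connected group acting with finitely many orbits would rescue the argument --- is exactly the paper's route. One caution: your intermediate heuristic that the fibres of $f|_D$ are Fano varieties with positive Euler number is looser than what is needed and plays no role in the actual argument; discard it. As a proof of the unconditional Conjecture~\ref{conj2a} your text is (necessarily) incomplete, but as an analysis of where the difficulty sits, it matches the paper.
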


\begin{conj}
  \label{conj2b}
  Let $g \colon X \dashrightarrow X^+$ be a Mori flip. Then
  \[ e^{\rm str}_{\rm alg}(X) > e^{\rm str}_{\rm alg}(X^+) \text{.}\]
\end{conj}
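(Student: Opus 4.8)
The plan is to evaluate both algebraic stringy Euler numbers on a single resolution dominating $X$ and $X^+$, reduce the claim to a termwise comparison of discrepancies, and isolate the positivity of the boundary-stratum contributions as the one place where the equivariant hypothesis enters. First I would choose a common $G$-equivariant log-desingularization $\rho\colon W\to X$ and $\rho^+\colon W\to X^+$ with $q=f\circ\rho=f^+\circ\rho^+\colon W\to Z$, arranged so that $W$ has only finitely many $G$-orbits (such a $W$ exists under the standing hypothesis, e.g.\ by equivariantly resolving the closure of the graph of $g$). Because a Mori flip is an isomorphism in codimension one, a prime divisor $E\subseteq W$ is $\rho$-exceptional if and only if it is $\rho^+$-exceptional, the strict transform of any divisor of $X$ being the strict transform of the corresponding divisor of $X^+$. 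Hence both log-desingularizations share the same exceptional divisors $D_1,\dots,D_k$, the same closed strata $D_J=\bigcap_{j\in J}D_j$, and the same locally closed strata $D_J^\circ=D_J\setminus\bigcup_{i\notin J}D_i$; only the discrepancies differ, namely $a_j$ with respect to $X$ and $a_j^{+}$ with respect to $X^+$.

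Next I would pass to the stratified form of the invariant. Applying inclusion--exclusion to $B_{\rm alg}(D_J,t)=\sum_{J'\supseteq J}B_{\rm alg}(D_{J'}^\circ,t)$ (using additivity of $B_{\rm alg}$) turns the product $\prod_{j\in J}\left(\frac{t^2-1}{t^{2(a_j+1)}-1}-1\right)$ into the product $\prod_{j\in J}\frac{t^2-1}{t^{2(a_j+1)}-1}$ over open strata, and letting $t\to1$ yields
\[ e^{\rm str}_{\rm alg}(X)=\sum_{\emptyset\subseteq J\subseteq I} e_{\rm alg}(D_J^\circ)\prod_{j\in J}\frac{1}{a_j+1}\text{,} \]
together with the analogous formula for $X^+$ with $a_j^{+}$ in place of $a_j$. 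The decisive gain is that every coefficient $\prod_{j\in J}\frac{1}{a_j+1}$ is now \emph{strictly positive}, since log-terminality gives $a_j+1>0$. On the discrepancy side I would invoke the standard comparison for flips (the negativity lemma): as $-K_X$ is $f$-ample and $K_{X^+}$ is $f^+$-ample, one has $a_j\le a_j^{+}$ for all $j$, with strict inequality exactly for those $D_j$ whose center on $X$ lies in the flipping locus. Consequently $\frac{1}{a_j+1}\ge\frac{1}{a_j^{+}+1}>0$, so the positive products dominate termwise.

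The heart of the matter---and the only step genuinely using the group action---is the positivity $e_{\rm alg}(D_J^\circ)\ge0$ for every stratum. Each $D_J^\circ$ is a smooth $G$-variety with finitely many orbits, so additivity gives $e_{\rm alg}(D_J^\circ)=\sum_O e_{\rm alg}(O)$, the sum over the orbits $O\cong G/H$ it contains, and I would prove $e_{\rm alg}(G/H)\ge0$ for each homogeneous space: the action of a maximal torus shows that $e_{\rm alg}(G/H)=e_{\rm top}(G/H)$ vanishes when $\rank H<\rank G$ (a fixed-point-free one-parameter subgroup forces the Euler number to be zero) and is a positive Weyl-group index when $\rank H=\rank G$. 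I expect this lemma to be the \emph{main obstacle}, and it is precisely the point at which the general Conjecture~\ref{conj2b} can fail: without the orbit hypothesis a stratum $D_J^\circ$ may have negative Euler number, and the sign of its contribution becomes uncontrolled.

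Finally I would assemble the inequality. Writing
\[ e^{\rm str}_{\rm alg}(X)-e^{\rm str}_{\rm alg}(X^+)=\sum_{\emptyset\subseteq J\subseteq I} e_{\rm alg}(D_J^\circ)\left(\prod_{j\in J}\frac{1}{a_j+1}-\prod_{j\in J}\frac{1}{a_j^{+}+1}\right)\text{,} \]
every summand is nonnegative by the two previous steps, and the $J=\emptyset$ term cancels. For strictness, the flipping locus is nonempty, so there is an exceptional divisor $D_{j_0}$ with $a_{j_0}<a_{j_0}^{+}$. Its closure $D_{j_0}$ is a smooth projective $G$-variety with finitely many orbits, hence $e_{\rm alg}(D_{j_0})=\sum_{J\ni j_0}e_{\rm alg}(D_J^\circ)>0$, so at least one stratum $J\ni j_0$ has $e_{\rm alg}(D_J^\circ)>0$; for that $J$ the bracket is strictly positive because all its factors are positive and the factor at $j_0$ is strictly larger. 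Therefore the total difference is strictly positive, giving $e^{\rm str}_{\rm alg}(X)>e^{\rm str}_{\rm alg}(X^+)$.
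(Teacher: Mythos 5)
Your proposal is correct (in the $G$-equivariant, finitely-many-orbits setting, which is also the only case the paper proves) and follows essentially the same route as the paper's Theorem~\ref{th:fo-flip}: a common equivariant resolution sharing the same exceptional divisors, the open-stratum formula $e^{\st}_{\alg}(X)=\sum_{J} e_{\alg}(D_J^\circ)\prod_{j\in J}(a_j+1)^{-1}$, the discrepancy comparison $a_j\le a_j^+$ with strict inequality for some $j_0$, and nonnegativity/positivity of $e_{\alg}$ on strata via Euler numbers of homogeneous spaces (Brion--Peyre). The only cosmetic difference is your strictness step, which uses additivity of $e_{\alg}$ over the stratification of $D_{j_0}$, whereas the paper picks a maximal stratum $J\ni j_0$ with $D_J^\circ=D_J\ne\emptyset$ containing a projective orbit.
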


\begin{remark}
  From the viewpoint of Conjectures \ref{conj1}, \ref{conj2a}, and
  \ref{conj2b}, a projective algebraic variety $X$ with at worst
  terminal singularities is a {\em minimal model} in a given
  birational class if its algebraic stringy Euler number
  $e^{\rm str}_{\rm alg}(X)$ has the minimal possible value among all
  projective algebraic varieties with at worst terminal singularities
  in the same birational class.
\end{remark}

Since $e_{\rm top}(X) = e^{\rm str}_{\rm top}(X)$ for smooth algebraic
varieties, Conjecture~\ref{conj1} is obviously false for the
topological stringy Euler number $e^{\rm str}_{\rm top}(X)$ because
$e_{\rm top}(C) < 0$ already for smooth projective algebraic curves
$C$ of genus $g \geq 2$. If $X$ is a blow-up of a smooth curve $C$ in
a smooth minimal threefold $X'$, then
$e_{\rm top}(X) = e_{\rm top}(X') + e_{\rm top}(C)$ and one gets the
opposite inequality
$e^{\rm str}_{\rm top}(X) < e^{\rm str}_{\rm top}(X')$ if $g \geq 2$.
Therefore Conjecture \ref{conj2a} is also false for topological
stringy Euler numbers. This does not happen for the algebraic Euler
number $e_{\rm alg}(X)$ of smooth projective varieties $X$, because
the cohomology groups in odd degrees do not contribute to it.

In general, the topological and algebraic stringy Euler numbers of a
singular variety are rational numbers. However, if $X =X_\Sigma$ is a
$d$-dimensional $\Q$-Gorenstein projective toric variety associated to
a rational polyhedral fan $\Sigma$ in a $d$-dimensional $\Q$-space
$N_\Q$, then the stringy Euler number
$e^{\rm str}_{\rm top}(X_\Sigma)$ is always a positive integer. This
statement follows from the explicit computation of the stringy Betti
function of the toric variety $X_\Sigma$ by the formula
\[ B^{\rm str}_{\rm top}(X_\Sigma, t) = ( t^2-1)^d \sum_{ n \in N}
t^{-2 \kappa(n)}\text{,} \]
where $N \subseteq N_\Q= N \otimes \Q $ is the lattice of
$1$-parameter subgroups in the $d$-dimensional algebraic torus acting
on $X_\Sigma$ and $\kappa \colon N_\Qd \to \Qd$ is a special piecewise
linear function having the following properties: 1) $\kappa$ is linear
on every cone $\sigma \in \Sigma$; 2) $\kappa$ has value $1$ on
primitive lattice generators of $1$-dimensional cones in $\Sigma$
(see~\cite{Bat-Sing}). Moreover, it was observed in
\cite[Proposition~4.10]{Bat-Sing} that the stringy Euler number
$e^{\rm str}_{\rm top}(X_\Sigma)$ of a $d$-dimensional toric variety
$X_\Sigma$ coincides with the normalized volume
$d!\vol({\rm shed}(\Sigma))$ of the bounded polyhedral set
\[ {\rm shed}(\Sigma)\coloneqq \{ x \in N_\Q \mid \kappa(x) \leq
1\}\text{,}\]
which is called the \emph{shed of the fan} $\Sigma$
(see~\cite[(4.2)]{Reid}). Torus-equivariant birational morphisms of
toric varieties $X_\Sigma \dasharrow X_{\Sigma'}$ can be described by
polyhedral subdivisions of the corresponding fans. It was observed by
Reid in \cite{Reid} that one has strict inclusions
${\rm shed}(\Sigma') \subsetneq {\rm shed}(\Sigma)$ for divisorial
toric Mori contractions $f \colon X_\Sigma \to X_{\Sigma'}$ and
${\rm shed}(\Sigma^+) \subsetneq {\rm shed}(\Sigma)$ for toric Mori
flips $g \colon X_{\Sigma} \dashrightarrow X_{\Sigma^+}$. Therefore,
the statements in Conjectures~\ref{conj2a}~and~\ref{conj2b} hold true
for equivariant birational morphisms in the toric Mori program. It is
not difficult to show that for projective toric varieties $X$ one
always has even the stronger inequality
$e^{\rm str}_{\rm top}(X) \geq \dim X + 1$.

Let $G$ be a connected reductive algebraic group. An irreducible
normal $G$-variety $X$ is called {\em spherical} if a Borel subgroup
$B \subseteq G$ has an open orbit in $X$. Spherical varieties are
generalizations of toric varieties. Following the equivariant Mori
program for toric varieties in \cite{Reid}, the equivariant Mori
program for spherical varieties was considered by Brion and Knop in
\cite{Br93,Br-Kn} (see also the recent papers of Perrin and Pasquier
\cite{Per,Pas15-MMP}). It is known that a $\Q$-Gorenstein spherical
variety is always log-terminal (see \cite{AB04} and
\cite[Proposition~5.6]{Pas15-Sing}). As in the case of toric
varieties, one has the equality
$B^{\rm str}_{\rm alg}(X, t) = B^{\rm str}_{\rm top}(X, t)$ for any
spherical variety $X$. However, the stringy Euler number of a
projective spherical variety $X$ is not always an integer even in the
case when $X$ is Gorenstein (see~\cite[Example 4.6]{Bat-Mor}).

Our main purpose of this paper is to attract attention to the above
conjectures and to prove them for arbitrary projective spherical
varieties:

\begin{theorem}
  \label{spher}
  Let $G$ be a connected reductive algebraic group. Conjectures
  \ref{conj1}, \ref{conj2a} and \ref{conj2b} are true in the
  $G$-equivariant Mori program for projective spherical $G$-varieties.
\end{theorem}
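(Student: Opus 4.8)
The plan is to reduce all three conjectures to a single geometric statement about volumes of polyhedral sheds, exactly as in the toric case, and then to control these volumes under the combinatorially-described operations of the equivariant Mori program for spherical varieties. As recalled in the introduction, for every spherical variety one has $B^{\rm str}_{\rm alg}(X,t) = B^{\rm str}_{\rm top}(X,t)$, and hence $e^{\rm str}_{\rm alg}(X) = e^{\rm str}_{\rm top}(X)$. All three conjectures therefore reduce to the corresponding statements for the topological stringy Euler number. This reduction is decisive: it replaces the delicate algebraic Betti numbers, for which the multiplicativity property fails, by topological ones, for which an orbit-by-orbit computation of the stringy invariant is available.

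The next step is to establish an analogue of the toric identity $e^{\rm str}_{\rm top}(X_\Sigma) = d!\vol({\rm shed}(\Sigma))$. Every $\Q$-Gorenstein spherical variety admits a $G$-equivariant toroidal log-desingularization $\rho \colon Y \to X$, obtained by first removing all colors and then subdividing the colored fan, and such a $Y$ carries only finitely many $G$-orbits. On $Y$ the discrepancies $a_j$ of the exceptional divisors are determined combinatorially by the colored fan together with the $G$-anticanonical data, so the defining sum for $e^{\rm str}_{\rm top}(X)$ collapses to a finite sum indexed by the cones of the fan. I would encode these contributions in a piecewise-linear function $\kappa_X$ on $N_\Q$ that is linear on each cone of the fan and takes the value $1$ on the primitive generators of the $G$-stable rays, with the contribution of the colors incorporated through the $G$-anticanonical class; the goal is the identity $e^{\rm str}_{\rm top}(X) = d!\vol(\{x \in N_\Q : \kappa_X(x) \le 1\})$, exhibiting the right-hand side as the normalized volume of the \emph{spherical shed} of $X$. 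Positivity (Conjecture~\ref{conj1}) is then immediate, since for projective $\Q$-Gorenstein log-terminal $X$ the shed is a full-dimensional bounded region and hence has strictly positive volume.

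For the monotonicity statements I would use the Brion--Knop description of the equivariant Mori program, in which a divisorial Mori contraction $f \colon X \to X'$ and a Mori flip $g \colon X \dashrightarrow X^+$ are realized by explicit modifications of the colored fan inside the fixed ambient lattice $N$: a divisorial contraction deletes a $G$-stable ray or a color, while a flip replaces one colored subdivision of a cone by the opposite one. Since $N$ and the valuation cone are unchanged, the sheds of $X$, $X'$ and $X^+$ all live in the same space $N_\Q$ and can be compared directly.

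Finally, generalizing Reid's toric argument, I would translate the $f$-ampleness of $-K_X$ and the $f^+$-ampleness of $K_{X^+}$ into the strict inclusions ${\rm shed}(X') \subsetneq {\rm shed}(X)$ and ${\rm shed}(X^+) \subsetneq {\rm shed}(X)$, which together with the volume formula yield the strict inequalities of Conjectures~\ref{conj2a} and~\ref{conj2b}. The main obstacle lies precisely here: unlike in the toric setting, a contraction or flip may change the set of colors present in the fan, and consequently alter the piecewise-linear function $\kappa_X$ in a way that has no toric analogue. The ampleness hypotheses must be shown to force the shed to shrink even across such color-changing steps, and establishing this strict inclusion in the presence of colors is the genuinely new geometric input of the spherical case.
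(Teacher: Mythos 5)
Your proposal breaks down at its central step: the claimed shed-volume formula $e^{\rm str}_{\rm top}(X) = d!\vol\{x \in N_\Q : \kappa_X(x) \le 1\}$ cannot hold for general spherical varieties. The lattice $N$ of a spherical $G$-variety has rank equal to the \emph{rank} of $X$ (the rank of the weight lattice of the open $B$-orbit), which is in general strictly smaller than $d = \dim X$; for a flag variety $G/P$ the colored fan lives in a zero-dimensional space, so any shed has volume zero, while $e^{\rm str}_{\rm top}(G/P) = e_{\rm top}(G/P) = |W_G|/|W_P| > 0$. The reason the toric argument does not transfer is that the $G$-orbits of a spherical variety are not tori: they fiber over projective homogeneous spaces, and by Proposition~\ref{prop:egh} their Euler numbers contribute factors $|W_G|/|W_H|$ that are invisible to any volume in $N_\Q$. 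Even in the horospherical case, where an explicit formula for the stringy $E$-function exists (see \cite{Bat-Mor}), it is a weighted sum involving Euler numbers of flag varieties, not a pure shed volume. Moreover, even if one granted some corrected shed-type identity, your argument for Conjectures~\ref{conj2a} and~\ref{conj2b} ends by acknowledging that the strict inclusion of sheds across color-changing contractions and flips is an open geometric problem; so the monotonicity statements are not established even conditionally. (Your first reduction, $e^{\rm str}_{\rm alg} = e^{\rm str}_{\rm top}$ for spherical varieties, is correct, but it is not where the difficulty lies.)

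For comparison, the paper avoids all combinatorics of colored fans. It proves the three conjectures for any $G$-variety admitting a $G$-equivariant log-desingularization $Y \to X$ with finitely many $G$-orbits (Theorems~\ref{th:fo-0}, \ref{th:fo-flip}, \ref{th:fo-divc}), using three soft inputs: (i) the expression $e^{\st}_{\alg}(X) = \sum_{J} e_{\alg}(D_J^\circ)\prod_{j \in J}\tfrac{1}{a_j+1}$ from Definition~\ref{def:str}; (ii) the Brion--Peyre results \cite{bp02} giving $e_{\alg}(G/H) = e_{\rm top}(G/H) \ge 0$, with strict positivity for projective orbits, so that every stratum $D_J^\circ$ (a finite union of orbits) contributes nonnegatively and the strata $D_J = D_J^\circ \neq \emptyset$, being projective, contribute positively; and (iii) the discrepancy monotonicity lemma \cite[Lemma~3.4]{is05}, which gives $a_j \le a_j^+$ (resp.\ $a_j \le a_j'$) with at least one strict inequality under a Mori flip (resp.\ divisorial contraction). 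Strict monotonicity of $e^{\st}_{\alg}$ then follows by choosing $J$ containing the index $j_0$ of a strictly increasing discrepancy with $D_J = D_J^\circ \neq \emptyset$. If you want to salvage your approach, you would need to replace the shed by an orbit-weighted invariant; but at that point you have essentially reconstructed the paper's argument, in which the role of Reid's shed inclusion is played by the discrepancy inequality of \cite{is05}.
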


We remark that our proof of the above theorem works even for more
general $G$-varieties having a regular action of a connected algebraic
group $G$. We need only the following well-known properties of
spherical varieties: 1) every spherical $G$-variety $X$ contains only
finitely many $G$-orbits; 2) every singular spherical $G$-variety $X$
always admits a $G$-equivariant log-desingularization $Y$ which is
again a spherical variety; 3) every birational $G$-equivariant map
$g \colon X_1 \dasharrow X_2$ between two spherical $G$-varieties can
be extended to a $G$-equivariant commutative diagram
\[ \xymatrix{   & Y\ar[ld]_{\rho_1} \ar[rd]^{\rho_2} & \\
  X_1 \ar@{-->}[rr]^f & & X_2} \]
for some smooth spherical $G$-variety $Y$ and $G$-equivariant
log-desingularizations $\rho_1 \colon Y \to X_1$ and
$\rho_2 \colon Y \to X_2$.

In Section~\ref{sec:gsi}, we explain the construction of generalized
stringy invariants of algebraic varieties that include the algebraic
stringy Euler number and some other invariants that could attain
minimum on the minimal models in a given birational class.

In Section~\ref{sec:eqf}, we prove Theorem~\ref{spher} as a
consequence of some more general results on the $G$-equivariant
birational Mori theory. We also discuss some generalizations of our
results that include the $G$-equivariant Mori program for pairs
$(X,\Delta)$.

\section{Generalized stringy invariants of algebraic varieties}
\label{sec:gsi}

We denote by $\Vm_\C$ the category of complex algebraic varieties. Let
$K_0(\Vm_\C)$ be the \emph{Grothendieck group of complex algebraic
  varieties}, which is the quotient of the free abelian group
generated by isomorphism classes $[X]$ of complex algebraic varieties
modulo the subgroup generated by $[X] - [X'] - [X \setminus X']$ for
closed subvarieties $X' \subseteq X$. One defines the structure of a
commutative ring on $K_0(\Vm_\C)$ by the equation
$[X] \cdot [X'] \coloneqq [X \times X']$. We can consider the ring
$K_0(\Vm_{\C})$ as a module over the polynomial ring $\Z[\Lv]$ using
the ring homomorphism $\Z[\Lv] \to K_0(\Vm_{\C})$ that maps $\Lv$ to
the class $ \Ld \coloneqq [\Ad^1] \in K_0(\Vm_{\C})$.

Let $S \coloneqq S(\lambda) \subseteq \Z[\Lv]$ be the multiplicative
subset generated by $\Lv$ and the polynomials $\sum_{i=0}^k \Lv^i$ for
all $k \geq 0$. We define a filtration $F^*_\infty$ on the localized
ring $S^{-1} \Z[\Lv]$ using the subgroups $F^m_\infty S^{-1} \Z[\Lv]$
$(m\in \Z)$ generated by all rational functions $p(\Lv)/q(\Lv)$ such
that $q(\Lv) \in S$ and $\deg q - \deg p \geq m$. This filtration is
obtained by the restriction to $S^{-1} \Z[\Lv]$ of the filtration on
the ring of rational functions on $\Pd^1$ over ${\rm Spec}\, \Z$
defined by the discrete valuation corresponding to the point
$\infty \in \Pd^1$.

We define the filtration ${\mathcal F}^*$ on the
$S^{-1}\Z[\Lv]$-algebra
\[\widetilde{\Mm}_\C \coloneqq S^{-1}K_0(\Vm_{\C})\]
where ${\mathcal F}^m \widetilde{\Mm}_\C$ is the subgroup of
$\widetilde{\Mm}_\C$ generated by
\begin{align*}
  \rleft\{ \frac{[X]}{
  q(\Ld)} : \text{$\deg q(\Lv) -
  \dim X \ge m$, $q(\Lv) \in S$} \rright\}\text{.}
\end{align*}
This filtration is compatible with the filtration $F^*_\infty$ on
$S^{-1}\Z[\Lv]$:
\[ F^m_\infty \cdot {\mathcal F}^l \subseteq {\mathcal F}^{m+l} \;\;
\forall m, l \in \Z. \]

In papers devoted to motivic integration (see, for instance,
\cite[(3.2)]{dl99} or \cite[Definition~2.11]{cra04}) one often
considers the ring $\smash{\widehat{\Mm}_\C}$ obtained as the
completion of the ring $\widetilde{\Mm}_\C$ with respect to the
filtration $\Fm^*$. However, the kernel of the completion map
$\smash{\widetilde{\Mm}_\C} \to \smash{\widehat{\Mm}_\C}$ is rather
difficult to understand because $\widetilde{\Mm}_\C$ is not a
noetherian ring. The above filtration is necessary for the definition
of the topology and the convergence in this topology, but we do not
really need the completed ring $\widehat{\Mm}_\C$ because all
non-archimedean motivic integrals considered will take their values in
the localized ring $\smash{\widetilde{\Mm}_\C}$ or in an extension
$\smash{\widetilde{\Mm}_\C^{(r)}}$, which we explain next.

Let $r$ be a positive integer. Using the ring monomorphism
\begin{align*}
  \imath_r \colon S^{-1} \Z[\Lv] \to S^{-1} \Z[\Lv], \;\;
  \Lv \mapsto \Lv^r\text{,}
\end{align*}
we can identify the image of $\imath_r$ with $S^{-1} \Z[\Lv]$ and
obtain the ring extension
\[S^{-1} \Z[\Lv] \subseteq S(\Lv^{1/r})^{-1}\Z[\Lv^{1/r}]\text{.}\]
This allows us to consider any rational function
\[ \frac{\Lv -1 }{\Lv^{k/r} -1}, \;\; k \in \Nd, \]
as an element of $S(\Lv^{1/r})^{-1}\Z[\Lv^{1/r}]$. For any
$\Z[\Lv]$-module $M$, we define
\[ S^{-1}_{(r)}M \coloneqq S(\Lv^{1/r})^{-1}\Z[\Lv^{1/r}]
\otimes_{S^{-1}\Z[\Lv]} S^{-1}M\text{,} \] and we define
\[\widetilde{\Mm}_\C^{(r)}\coloneqq
S^{-1}_{(r)}K_0(\Vm_{\C})\text{.}\]

The following Definition~\ref{def:str} and Theorem~\ref{th:str} are
slightly modified versions of some of the content from
\cite[Section~4]{teh09}.

\begin{definition}
  \label{def:str}
  Let $X$ be a normal projective variety over $\Cd$ with at worst
  $\Qd$-Gorenstein log-terminal singularities. Denote by $r$ the
  minimal positive integer such that $rK_X$ is a Cartier divisor. Let
  $\rho\colon Y \to X$ be a log-desingularization together with smooth
  irreducible divisors $D_1, \dots, D_k$ with simple normal crossings
  whose support covers the exceptional locus of $\rho$. We can
  uniquely write
  \begin{align*}
    K_Y = \rho^*K_X + \sum_{i=1}^k a_iD_i
  \end{align*}
  for some rational numbers $a_i \in \frac{1}{r}\Z$ satisfying the
  additional condition $a_i = 0$ if $D_i$ is not in the exceptional
  locus of $\rho$. We set $I \coloneqq \{ 1, \ldots, k\}$ and, for any
  $\emptyset \subseteq J \subseteq I$, we define
  \begin{align*}
    D_J \coloneqq
    \begin{cases}
      Y &\text{if $J = \emptyset$,}\\
      \bigcap_{j \in J} D_j &\text{if $J \ne \emptyset$,}
    \end{cases}
    && D_J^\circ \coloneqq D_J \setminus
    \bigcup_{j \in I \setminus J} D_j\text{.}
  \end{align*}

  Let $\phi\colon K_0(\Vm_{\C}) \to M$ be a $\Zd[\Lv]$-module
  homomorphism. It induces a $S^{-1}_{(r)}\Z[\Lv]$-module homomorphism
  \begin{align*}
    S^{-1}_{(r)}\phi \colon  \widetilde{\Mm}_\C^{(r)} \to  S^{-1}_{(r)}M\text{.}
  \end{align*}
  We define the \emph{generalized stringy invariant} associated to $\phi$ as
  \begin{align*}
    \phi^{\st}(X) &\coloneqq
    \sum_{\emptyset \subseteq J \subseteq I}
    \rleft(\prod_{j \in J}\frac{\Lv-1}{\Lv^{a_j+1}-1}\rright)
    \cdot \phi(D_J^\circ)\\
    &=
    \sum_{\emptyset \subseteq J \subseteq I}
    \rleft(\prod_{j \in J}\frac{\Lv-1}{\Lv^{a_j+1}-1}-1\rright) \cdot \phi(D_J)
    \in S^{-1}_{(r)}M
  \end{align*}
\end{definition}
\begin{theorem}
  \label{th:str}
  The definition of $\phi^{\st}(X)$ is independent of the choice of
  the log-de\-sin\-gulari\-za\-tion.
\end{theorem}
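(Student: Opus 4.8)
The plan is to prove Theorem~\ref{th:str} in two stages: first to reduce to a single, \enquote{universal} class-valued invariant, and then to establish the independence of that invariant by motivic integration over the arc space of $X$.

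For the reduction, observe that the induced homomorphism $S^{-1}_{(r)}\phi$ is $S(\Lv^{1/r})^{-1}\Z[\Lv^{1/r}]$-linear, so it commutes with multiplication by each scalar factor $\prod_{j\in J}\frac{\Lv-1}{\Lv^{a_j+1}-1}$. Writing $[D_J^\circ]\in K_0(\Vm_\C)$ for the class of the locally closed stratum and setting
\[ E^{\st}(X)\coloneqq\sum_{\emptyset\subseteq J\subseteq I}\Bigl(\prod_{j\in J}\frac{\Lv-1}{\Lv^{a_j+1}-1}\Bigr)[D_J^\circ]\in\widetilde{\Mm}_\C^{(r)}\text{,} \]
one has $\phi^{\st}(X)=(S^{-1}_{(r)}\phi)\bigl(E^{\st}(X)\bigr)$. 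Hence it suffices to show that $E^{\st}(X)$, the generalized stringy invariant attached to the identity homomorphism $K_0(\Vm_\C)\to K_0(\Vm_\C)$, does not depend on the log-desingularization; applying $S^{-1}_{(r)}\phi$ then yields the claim for every $\phi$ at once.

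To prove the independence of $E^{\st}(X)$, I would realize it as a motivic integral referring only to $X$. Since $rK_X$ is Cartier, the $\Qd$-Gorenstein structure determines a canonical order function on the arc space $J_\infty(X)$, measuring the contact of an arc with a local generator of $\Om_X(rK_X)$; integrating the associated power of $\Ld$ against the motivic measure $\mu_X$ gives an element $I(X)$ of a suitable completion of $\widetilde{\Mm}_\C^{(r)}$ that makes no reference to any resolution. For a fixed log-desingularization $\rho\colon Y\to X$ with $K_Y=\rho^*K_X+\sum_i a_iD_i$, the change of variables formula for motivic integrals rewrites $I(X)$ as an integral over $J_\infty(Y)$ whose integrand is twisted by the order of the relative canonical divisor $\sum_i a_iD_i$. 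As $Y$ is smooth and $D_1+\dots+D_k$ has simple normal crossings, stratifying $J_\infty(Y)$ by the contact orders of an arc with the $D_i$ and summing the resulting geometric series — one factor $\frac{\Lv-1}{\Lv^{a_j+1}-1}$ for each $j\in J$ over the stratum $D_J^\circ$ — recovers $E^{\st}(X)$ as computed from $\rho$, up to the fixed power of $\Ld$ prescribed by the normalization of $\mu_X$. Since $I(X)$ does not involve $\rho$, neither does $E^{\st}(X)$.

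The main obstacle is technical rather than conceptual. One must develop motivic integration with values in the root extension $\widetilde{\Mm}_\C^{(r)}$ forced by the fractional discrepancies $a_i\in\frac1r\Z$, justify the change of variables formula in the merely $\Qd$-Gorenstein (not Gorenstein) setting, and — the finiteness point stressed just before Definition~\ref{def:str} — confirm that the a priori completed value $I(X)$ in fact lies in the uncompleted ring $\widetilde{\Mm}_\C^{(r)}$, so that $E^{\st}(X)$ is a genuine element there. A more elementary alternative sidesteps arc spaces entirely: by the weak factorization theorem any two log-desingularizations are linked by a chain of blow-ups and blow-downs along smooth centers meeting the boundary divisors normally, so it would suffice to check invariance of $E^{\st}(X)$ under one such blow-up. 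This reduces the problem to a single identity among the factors $\frac{\Lv-1}{\Lv^{a+1}-1}$, where the new exceptional divisor carries discrepancy $a_E=(\operatorname{codim}Z-1)+\sum_{i\,:\,Z\subseteq D_i}a_i$, at the price of a careful bookkeeping of the strata created on the blow-up.
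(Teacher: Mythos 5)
Your proposal is correct and takes essentially the same route as the paper: both arguments first use the $S(\Lv^{1/r})^{-1}\Z[\Lv^{1/r}]$-linearity of $S^{-1}_{(r)}\phi$ to reduce everything to the universal class $[X]^{\st} = \sum_{J}\bigl(\prod_{j \in J}\frac{\Ld-1}{\Ld^{a_j+1}-1}\bigr)[D_J^\circ] \in \widetilde{\Mm}_\C^{(r)}$, and then establish that this single class is independent of the log-desingularization by motivic integration. The only difference is that the paper dispatches the second step by citing the literature (the appendix of Veys), whereas you sketch the content of that citation --- the arc-space integral attached to $rK_X$, the change-of-variables formula, and the contact-order stratification --- together with the relevant technical caveats.
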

\begin{proof}
  It follows from the theory of motivic integration (see, for
  instance, \cite[Appendix]{vey01}) that the element
  \begin{align*}
    [X]^{\st} \coloneqq \sum_{\emptyset \subseteq J \subseteq I}
    \rleft(\prod_{j \in J}\frac{\Ld-1}{\Ld^{a_j+1}-1}\rright) \cdot [D_J^\circ]
    \in \widetilde{\Mm}_\C^{(r)}
  \end{align*}
  is independent of the choice of the log-desingularization. By
  $\phi^{\st}(X) = S^{-1}_{(r)}\phi([X]^{\st})$, the element
  $\phi^{\st}(X)$ is also independent of the choice of the
  log-desingularization.
\end{proof}

We now consider examples of $\Zd[\Lv]$-module homomorphisms
$\phi \colon K_0(\Vm_{\C}) \to M$ and their respective stringy
invariants.

\begin{example}
  \label{ex:1}
  The \emph{virtual Hodge polynomial} (also known as the \emph{$E$-polynomial})
  \begin{align*}
    E(X; u,v) \coloneqq \sum_{p,q} (-1)^{p+q} h^{p,q}(X) u^pv^q
  \end{align*}
  induces a $\Zd[\Lv]$-module homomorphism
  $K_0(\Vm_{\C}) \to \Zd[u,v]$ where the $\Zd[\Lv]$-module structure
  on $\Zd[u,v]$ is given by the map $\Lv \mapsto uv$. By
  Theorem~\ref{th:str}, we obtain the \emph{stringy $E$-function}
  \[ E^{\st}(X; u,v) \in S^{-1}_{(r)}\Zd[u, v]\text{.} \]
\end{example}

\begin{example}
  The topological virtual Betti polynomial
  \begin{align*}
    B_{\rm top}(X; t) \coloneqq E(X; t, t)
  \end{align*}
  induces a $\Zd[\Lv]$-module homomorphism $K_0(\Vm_{\C}) \to \Zd[t]$
  where the $\Zd[\Lv]$-module structure on $\Zd[t]$ is given by the
  map $\Lv \mapsto t^2$. By Theorem~\ref{th:str}, we obtain the
  \emph{topological stringy Betti function}
  \[ B^{\st}_{\rm top}(X; t) \in S^{-1}_{(r)}\Zd[t^2]\text{.} \]
\end{example}

\begin{example}
  The topological Euler number
  \begin{align*}
    e_{\rm top}(X) \coloneqq B(X; 1) = E(X; 1, 1)
  \end{align*}
  induces a $\Zd[\Lv]$-module homomorphism $K_0(\Vm_{\C}) \to \Zd$
  where the $\Zd[\Lv]$-module structure on $\Zd$ is given by the map
  $\Lv \mapsto 1$. By Theorem~\ref{th:str}, we obtain the
  \emph{topological stringy Euler number}
  \[ e^{\st}_{\rm top}(X) \in S^{-1}_{(r)}\Zd\text{.} \]
  Using the map $S^{-1}_{(r)}\Zd \to \Qd$, $\lambda^{1/r} \mapsto 1$,
  we interpret $e^{\st}_{\rm top}(X) \in \Qd$, and then we have
  \[ e^{\st}_{\rm top}(X) = \lim_{t\to 1} B^{\st}_{\rm top}(X; t) =
  \lim_{u,v\to 1} E^{\st}(X; u,v)\text{.} \]
\end{example}

The following was shown by Teh using \cite[Theorem~3.1]{bit04}.

\begin{prop}[{\cite[Proposition~4, $k=2p$]{teh09}}]
  The algebraic Betti polynomial of smooth projective algebraic
  varieties
  \begin{align*}
    B_{\alg}(X;t) \coloneqq \sum_{i} b_{\alg}^{2i}(X) t^{2i}
  \end{align*}
  can be extended to a $\Z[\Lv]$-module homomorphism
  \[ B_{\alg}\colon  K_0(\Vm_{\C}) \to \Z[t^2] \]
  where the $\Zd[\Lv]$-module structure on $\Zd[t^2]$
  is given by $\Lv \mapsto t^2$.
\end{prop}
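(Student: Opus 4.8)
The plan is to invoke the Bittner presentation of the Grothendieck group \cite[Theorem~3.1]{bit04}: as an abelian group, $K_0(\Vm_\C)$ is generated by the classes $[X]$ of smooth projective varieties, subject only to $[\emptyset]=0$ and the \emph{blow-up relation}
\[ [\Bl_Y X] - [E] = [X] - [Y], \]
where $X$ is smooth projective, $Y \subseteq X$ is a smooth closed subvariety, and $E$ is the exceptional divisor of $\Bl_Y X$. Hence, to extend $B_{\alg}$ to a group homomorphism it suffices to set $[X] \mapsto B_{\alg}(X;t)$ on smooth projective $X$ and to check that this assignment respects these two relations; the relation $[\emptyset]=0$ is trivial.

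The essential step is the blow-up relation. First I would record the two classical cohomological decompositions for $Y$ of codimension $c$: since $E = \Pd(N_{Y/X})$ is a projective bundle over $Y$, the projective bundle formula gives $H^{2i}(E,\C) \cong \bigoplus_{j=0}^{c-1} H^{2(i-j)}(Y,\C)$, while the blow-up formula gives $H^{2i}(\Bl_Y X,\C) \cong H^{2i}(X,\C) \oplus \bigoplus_{j=1}^{c-1} H^{2(i-j)}(Y,\C)$. The key point is that both isomorphisms are induced by algebraic correspondences: they are assembled from pullback along the (algebraic) blow-down morphism, Gysin pushforward along the inclusion of $E$, and cup products with the (algebraic) tautological divisor class on $E$. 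Together with their inverses, these operations send algebraic cycle classes to algebraic cycle classes, so the decompositions restrict to the algebraic subspaces $H^{2i}_{\alg}$.

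Passing to dimensions then yields $B_{\alg}(E;t) = \sum_{j=0}^{c-1} t^{2j} B_{\alg}(Y;t)$ and $B_{\alg}(\Bl_Y X;t) = B_{\alg}(X;t) + \sum_{j=1}^{c-1} t^{2j} B_{\alg}(Y;t)$, and subtracting gives precisely $B_{\alg}(\Bl_Y X;t) - B_{\alg}(E;t) = B_{\alg}(X;t) - B_{\alg}(Y;t)$. This is the image of the blow-up relation, so $B_{\alg}$ descends to a well-defined group homomorphism $K_0(\Vm_\C) \to \Z[t^2]$. For $\Z[\Lv]$-linearity it suffices, by additivity, to verify $B_{\alg}(\Ld \cdot [X];t) = t^2 B_{\alg}(X;t)$ on generators; writing $\Ld = [\Pd^1] - 1$ and applying the projective bundle formula to the trivial bundle $\Pd^1 \times X$ (again restricted to algebraic cohomology) gives $B_{\alg}(\Pd^1 \times X;t) = (1+t^2) B_{\alg}(X;t)$, whence $B_{\alg}(\Ld \cdot [X];t) = B_{\alg}(\Pd^1 \times X;t) - B_{\alg}(X;t) = t^2 B_{\alg}(X;t)$.

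The main obstacle is the algebraicity assertion in the second paragraph: the whole argument rests on knowing that the blow-up and projective bundle decompositions are realized by algebraic cycles, so that they preserve the subspaces generated by classes of algebraic cycles. Once this is granted --- which is exactly where the input from \cite{bit04} and the explicit structure of these correspondences enters --- the remaining verifications are formal.
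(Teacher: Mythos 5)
Your proof is correct and takes essentially the same route as the paper: the paper does not prove this proposition itself but cites it from Teh \cite[Proposition~4]{teh09}, noting that it ``was shown by Teh using \cite[Theorem~3.1]{bit04}'' --- exactly the Bittner presentation you invoke. The details you supply (the blow-up and projective-bundle decompositions being realized by algebraic correspondences, hence restricting to $H^{2i}_{\rm alg}$, plus the $\Z[\Lv]$-linearity check via $\Ld = [\Pd^1]-1$) are the standard content behind that citation.
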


\begin{cor}
  By Theorem~\ref{th:str}, we obtain
  that the algebraic stringy Betti function
  \begin{align*}
    B^{\st}_{\alg}(X; t) \in S^{-1}_{(r)} \Z[t]
  \end{align*}
  does not depend on the choice of the log-desingularization and the
  same is true for the algebraic stringy Euler number
  \begin{align*}
    e^{\st}_{\alg}(X) = \lim_{t\to 1} B^{\st}_{\alg}(X; t)\in \Qd\text{.}
  \end{align*}
\end{cor}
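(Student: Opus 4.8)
The plan is to apply Theorem~\ref{th:str} directly to the module homomorphism furnished by the preceding Proposition. That Proposition asserts that $B_{\alg}$ extends from smooth projective varieties to a $\Z[\Lv]$-module homomorphism $B_{\alg}\colon K_0(\Vm_\C) \to \Z[t^2]$, where $\Z[t^2]$ carries the $\Z[\Lv]$-module structure given by $\Lv \mapsto t^2$. This is exactly the input required by Theorem~\ref{th:str}: setting $\phi = B_{\alg}$ and $M = \Z[t^2]$, the associated generalized stringy invariant is
\[ \phi^{\st}(X) = B^{\st}_{\alg}(X; t) \in S^{-1}_{(r)}\Z[t], \]
and Theorem~\ref{th:str} tells us that this element is independent of the chosen log-desingularization. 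This settles the first assertion with no further work.

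It remains to give meaning to $\lim_{t\to 1} B^{\st}_{\alg}(X;t)$ and to see that it is rational. Since the singularities are log-terminal, each discrepancy satisfies $a_j > -1$, and as $a_j \in \frac{1}{r}\Z$ we may write $a_j + 1 = m_j/r$ with $m_j$ a positive integer. Putting $s \coloneqq \Lv^{1/r}$, each factor occurring in $B^{\st}_{\alg}(X;t)$ takes the form
\[ \frac{\Lv - 1}{\Lv^{a_j+1} - 1} = \frac{s^r - 1}{s^{m_j} - 1} = \frac{\sum_{i=0}^{r-1} s^i}{\sum_{i=0}^{m_j - 1} s^i}, \]
which is manifestly an element of $S(\Lv^{1/r})^{-1}\Z[\Lv^{1/r}]$ whose denominator lies in the multiplicative set $S(\Lv^{1/r})$.

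The decisive point is that the evaluation $s \mapsto 1$, that is, the map $S^{-1}_{(r)}\Z \to \Q$, $\Lv^{1/r} \mapsto 1$, already used above for $e^{\st}_{\rm top}(X)$, is a well-defined ring homomorphism: each generator of $S(\Lv^{1/r})$ maps to a nonzero integer, since $s \mapsto 1$ and $\sum_{i=0}^k s^i \mapsto k+1$. Applying this homomorphism realizes $\lim_{t\to 1}$ as an honest evaluation and yields
\[ e^{\st}_{\alg}(X) = \sum_{\emptyset \subseteq J \subseteq I} e_{\alg}(D_J) \prod_{j \in J}\frac{-a_j}{a_j+1} \in \Q, \]
using $\bigl(\sum_{i=0}^{r-1} s^i\bigr)/\bigl(\sum_{i=0}^{m_j-1} s^i\bigr) \mapsto r/m_j = 1/(a_j+1)$ together with $B_{\alg}(D_J, 1) = e_{\alg}(D_J)$. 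Independence from the log-desingularization is immediate, as $e^{\st}_{\alg}(X)$ is the image of the already-established invariant $B^{\st}_{\alg}(X;t)$ under a fixed homomorphism. I expect no genuine obstacle: the whole statement is formal given Theorem~\ref{th:str} and the Proposition. The single point demanding care is the legitimacy of evaluating at $t = 1$, and this is guaranteed precisely because $S$ was built from the polynomials $\sum_{i=0}^k \Lv^i$, none of which vanish at $\Lv = 1$; this is also what forces $e^{\st}_{\alg}(X)$ to be rational.
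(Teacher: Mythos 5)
Your proposal is correct and follows exactly the paper's route: the corollary is obtained by applying Theorem~\ref{th:str} to the $\Z[\Lv]$-module homomorphism $B_{\alg}\colon K_0(\Vm_\C) \to \Z[t^2]$ provided by the preceding Proposition, and then evaluating at $t=1$ (the map $\Lv^{1/r} \mapsto 1$ already used for the topological case) to get the rational number $e^{\st}_{\alg}(X)$. Your extra verification that the denominators $\sum_{i} \Lv^{i/r}$ do not vanish at $1$ is a legitimate filling-in of a detail the paper leaves implicit, not a deviation.
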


\begin{example}
  It is also possible to consider the following modified version of
  the virtual $E$-polynomial:
  \begin{align*}
    E_{\rm even}(X; u,v) \coloneqq \frac{1}{2} \left( E(X; u,v) +
    E(X; -u,-v) \right) =   \sum_{p+q \in 2\Z}  h^{p,q}(X) u^pv^q\text{.}
  \end{align*}
  The corresponding stringy invariant
  \[ E_{\rm even}^{\st}(X; u,v) \coloneqq \sum_{\emptyset \subseteq J
    \subseteq I} E_{\rm even}(D_J^\circ; u,v) \rleft(\prod_{j \in
    J}\frac{uv-1}{(uv)^{a_j+1}-1}\rright) \]
  we call the \emph{even stringy $E$-function}.
\end{example}

\begin{example}
  Another modified version of the virtual $E$-polynomial is
  \begin{align*}
    E_{p=q}(X; u,v) \coloneqq
    \sum_{0 \leq p \leq \dim X}  h^{p,p}(X) u^pv^p\text{.}
  \end{align*}
  This leads to the stringy invariant
  \[ E_{p=q}^{\st}(X; u,v) \coloneqq \sum_{\emptyset \subseteq J
    \subseteq I} E_{p=q}(D_J^\circ; u,v) \rleft(\prod_{j \in
    J}\frac{uv-1}{(uv)^{a_j+1}-1}\rright)\text{.} \]
\end{example}

\begin{remark}
  We also obtain the modified Euler numbers
  \begin{align*}
    e^{\st}_{\rm even}(X) \coloneqq \lim_{u,v \to 1}
    E^{\st}_{\rm even}(X; u,v)
    && \text{and} &&
    e^{\st}_{p=q}(X) \coloneqq \lim_{u,v \to 1}
    E^{\st}_{p=q}(X; u,v)\text{.}
  \end{align*}
  The results of the next section show that one can expect that
  Conjectures \ref{conj1}, \ref{conj2a}, and \ref{conj2b} could also
  be true for the rational numbers $e^{\st}_{\rm even}$ and
  $e^{\st}_{p=q}$, \ie these alternative stringy invariants are also
  candidates for invariants attaining their minimum on the minimal
  model in a given birational class.
\end{remark}

\section{Equivariant desingularizations with finitely many orbits}
\label{sec:eqf}

Let $G$ be a connected linear algebraic group. We consider the
category of algebraic $G$-varieties with $G$-equivariant birational
maps over $\Cd$. We need two results of Brion and Peyre.

\begin{lemma}[{\cite[Lemma~1 and Lemma~2]{bp02}}]
  \label{lemma:bp1}
  Let $H \subseteq G$ be a closed subgroup. Then there exists a
  locally trivial fibration in the Zariski topology $G/H \to Z$ with
  fiber $T$ such that $B_{\alg}(Z; t) = B_{\rm top}(Z; t)$ and $T$ is
  an algebraic torus.
\end{lemma}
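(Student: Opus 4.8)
The plan is to produce $Z$ as a homogeneous space $Z = G/H'$ for an intermediate closed subgroup $H \trianglelefteq H' \subseteq G$ with $H'/H \cong T$ an algebraic torus, and to arrange that the base $G/H'$ admits an affine paving. The map $G/H \to G/H'$ is then a fibration with fiber $T$, and since an algebraic torus is a special group in the sense of Serre, every such torus bundle is locally trivial in the Zariski topology; this yields the required fibration. It then remains to (i) construct $H'$ and (ii) verify $B_{\alg}(G/H';t) = B_{\rm top}(G/H';t)$.

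For (ii), I would first record the general principle that reduces the problem to the base. Both $B_{\alg}$ and $B_{\rm top}$ are $\Z[\Lv]$-module homomorphisms out of $K_0(\Vm_\C)$ sending $[\mathrm{pt}] \mapsto 1$, hence they agree on the image of $\Z[\Lv]$; that is, $B_{\alg}(\Ld^i) = t^{2i} = B_{\rm top}(\Ld^i)$. In particular, if $G/H'$ can be stratified into finitely many locally closed pieces each isomorphic to an affine space, then $[G/H'] \in \Z[\Ld]$ by additivity in $K_0(\Vm_\C)$, and the two Betti polynomials coincide on it. This is the form of the equality I would target, as it is the most robust: it simultaneously exhibits that the base has only algebraic cohomology concentrated in even degrees.

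The heart of the argument is (i): the structure-theoretic construction of $H'$. Here I would invoke the structure theory of connected linear algebraic groups over $\C$. Fixing a Levi decomposition $G = L \ltimes R_u(G)$, the unipotent radical contributes only affine-space directions (harmless, since $R_u(G)$ is isomorphic to affine space as a variety and the corresponding fibrations are locally trivial), so one reduces the essential case to the reductive group $L$ together with the interaction of $H$ with a maximal torus and a Borel subgroup of $L$. The torus $T = H'/H$ is chosen to absorb exactly the multiplicative directions that would otherwise obstruct an affine paving, and $H'$ is taken to be the subgroup generated by $H$ and this torus inside the normalizer of $H$; Bruhat-type cell decompositions of the relevant flag quotients then provide the affine paving of $Z = G/H'$.

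I expect the main obstacle to be precisely this construction together with the verification that $G/H'$ carries an affine paving: one must select the torus so that it normalizes $H$ with finite intersection—guaranteeing that $H'/H$ is a genuine torus and that the fibration is a torus bundle—while simultaneously forcing the quotient base to become cellular. The Zariski-local triviality is \emph{not} the difficulty, being supplied by Serre's theorem that tori are special; likewise the passage from the base back to $G/H$ is automatic, since $B_{\alg}$ and $B_{\rm top}$ are $\Z[\Lv]$-module maps, so multiplying the base identity by the class $(\Ld-1)^{\dim T} = [T]$ propagates the equality $B_{\alg} = B_{\rm top}$ to $G/H$ itself.
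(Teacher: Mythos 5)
The paper does not prove this lemma at all: it is imported wholesale from Brion--Peyre \cite{bp02} (their Lemmas~1 and~2), so the entire mathematical content is precisely the two items you label (i) and (ii). Your scaffolding around them is sound: if $H \trianglelefteq H' \subseteq G$ with $T = H'/H$ a torus, then $G/H \to G/H'$ is a principal $T$-bundle, hence Zariski-locally trivial because tori are special in the sense of Serre; and since $B_{\alg}$ and $B_{\rm top}$ are $\Z[\Lv]$-module homomorphisms sending $[\mathrm{pt}]$ to $1$, they agree on every class in the image of $\Z[\Lv]$, in particular on any base admitting an affine paving. (Your closing remark about propagating the equality back to $G/H$ is correct but is not part of the lemma; it is the proof of the Proposition that follows it in the paper.)

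The genuine gap is that step (i) --- constructing $H'$ and verifying that $Z = G/H'$ satisfies $B_{\alg}(Z;t) = B_{\rm top}(Z;t)$ --- is never carried out: it is replaced by the phrase that the torus should ``absorb exactly the multiplicative directions that would otherwise obstruct an affine paving,'' and you yourself flag it as the main obstacle. That step \emph{is} the lemma. Two concrete reasons your sketch would not go through as written. First, the Levi reduction points the wrong way: the lemma demands that the fiber be exactly a torus, so the affine-space directions coming from unipotent radicals must be pushed into the base $Z$ (which stays paved under Zariski-locally trivial affine-space fibrations), not treated as a separate ``harmless'' fiber; composing an affine-space fibration with a torus fibration produces fibers $T \times \Ad^n$, which are not tori. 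Second, ``Bruhat-type cell decompositions of the relevant flag quotients'' cannot supply the paving in general: comparing with Proposition~\ref{prop:egh}, the fiber dimension is forced to be $\rank G - \rank H$ (otherwise the Euler numbers are inconsistent), so any such construction bottoms out at equal-rank pairs, where the fibration is trivial and the lemma asserts outright that $B_{\alg}(G/H;t) = B_{\rm top}(G/H;t)$. An equal-rank subgroup $H$ need be neither parabolic nor a Levi subgroup --- pseudo-Levi subgroups such as $\operatorname{SL}_2 \times \operatorname{SL}_2 \subseteq \operatorname{Sp}_4$ occur --- so there is no Bruhat decomposition in sight, and one needs a genuine argument (algebraicity and purity of the cohomology of equal-rank homogeneous spaces, e.g.\ via Chern classes of homogeneous bundles). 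That argument is the substance of the cited Lemmas~1 and~2 of \cite{bp02}, and it is exactly the part your proposal leaves open.
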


\begin{prop}
  Let $H \subseteq G$ be a closed subgroup. Then for the
  quasi-projective variety $G/H$ one has
  \begin{equation*}
    B_{\alg}(G/H; t) = B_{\rm top}(G/H; t)\text{.}
  \end{equation*}
  In particular, one has $e_{\alg}(G/H) = e_{\rm top}(G/H)$.
\end{prop}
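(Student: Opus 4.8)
The plan is to use the Brion--Peyre fibration of Lemma~\ref{lemma:bp1} together with the $\Z[\Lv]$-linearity of $B_{\alg}$, thereby reducing the multiplicativity that genuinely fails for $B_{\alg}$ to the harmless operation of multiplying by a power of the Lefschetz class. First I would invoke Lemma~\ref{lemma:bp1} to obtain a Zariski-locally trivial fibration $\pi \colon G/H \to Z$ whose fiber is an algebraic torus $T \cong (\C^*)^n$ and whose base satisfies $B_{\alg}(Z;t) = B_{\rm top}(Z;t)$. The key structural step is to record the class of $G/H$ in the Grothendieck ring $K_0(\Vm_\C)$. Since $\pi$ is locally trivial in the Zariski topology, Noetherian induction yields a finite partition of $Z$ into locally closed subvarieties $Z_1, \dots, Z_s$ over each of which $\pi$ is trivial, so that $\pi^{-1}(Z_i) \cong Z_i \times T$. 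The scissor relations then give
\[
  [G/H] = \sum_{i=1}^s [\pi^{-1}(Z_i)] = \sum_{i=1}^s [Z_i \times T] = [T] \cdot \sum_{i=1}^s [Z_i] = [T] \cdot [Z]
\]
in $K_0(\Vm_\C)$.

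Next I would identify the torus class. Writing $T \cong (\C^*)^n$ and using $[\C^*] = \Ld - 1$, multiplicativity of $[\,\cdot\,]$ on products gives $[T] = (\Ld-1)^n$, which is the image of the polynomial $(\Lv-1)^n \in \Z[\Lv]$ under the structure homomorphism $\Z[\Lv] \to K_0(\Vm_\C)$, $\Lv \mapsto \Ld$. Hence $[G/H] = (\Lv-1)^n \cdot [Z]$, where the right-hand side is now read as the $\Z[\Lv]$-module action on $K_0(\Vm_\C)$. This is precisely the place where the argument must steer clear of full multiplicativity: I do not claim $B_{\alg}(G/H) = B_{\alg}(Z)\cdot B_{\alg}(T)$, but only that multiplication by the torus class coincides with the module action of $(\Lv-1)^n$.

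Finally, since both $B_{\alg}$ and $B_{\rm top}$ are $\Z[\Lv]$-module homomorphisms sending $\Lv$ to $t^2$, applying each of them to the identity $[G/H] = (\Lv-1)^n \cdot [Z]$ yields
\[
  B_{\alg}(G/H;t) = (t^2-1)^n \, B_{\alg}(Z;t), \qquad B_{\rm top}(G/H;t) = (t^2-1)^n \, B_{\rm top}(Z;t)\text{.}
\]
Combining these with the equality $B_{\alg}(Z;t) = B_{\rm top}(Z;t)$ supplied by Lemma~\ref{lemma:bp1} gives $B_{\alg}(G/H;t) = B_{\rm top}(G/H;t)$, and evaluating both sides at $t=1$ yields $e_{\alg}(G/H) = e_{\rm top}(G/H)$.

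The main obstacle to be careful about is exactly the failure of multiplicativity of $B_{\alg}$ already flagged in the introduction. The argument succeeds only because the fiber is a \emph{torus}: its class is a power of $\Ld-1$, so multiplying by it is encoded by the $\Z[\Lv]$-module structure, which every module homomorphism respects, whereas for a general fiber $F$ one could not factor $B_{\alg}(F;t)$ out of $B_{\alg}(G/H;t)$. The two auxiliary points that still require a line of justification are the additivity of $B_{\alg}$ and $B_{\rm top}$ over the locally closed partition (immediate from the definition of the $\Z[\Lv]$-module homomorphism on $K_0(\Vm_\C)$) and the triviality of $\pi$ over each stratum (from Zariski-local triviality via Noetherian induction).
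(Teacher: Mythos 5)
Your proof is correct and follows essentially the same route as the paper: both use the Brion--Peyre fibration of Lemma~\ref{lemma:bp1} and the fact that the torus fiber makes the product formula legitimate, the paper simply asserting $B_{\alg}(G/H;t) = B_{\alg}(T;t)B_{\alg}(Z;t)$ because $B_{\alg}(T;t) = (t^2-1)^{\dim T}$. Your write-up just makes explicit the justification the paper leaves implicit, namely that Zariski-local triviality gives $[G/H] = [T]\cdot[Z]$ in $K_0(\Vm_\C)$ with $[T] = (\Ld-1)^n$ lying in the image of $\Z[\Lv]$, so the $\Z[\Lv]$-linearity of $B_{\alg}$ suffices in place of the multiplicativity that fails in general.
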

\begin{proof}
  Using the fibration from Lemma~\ref{lemma:bp1}, one obtains
  \begin{align*}
    B_{\alg}(G/H; t) =  B_{\alg}(T; t) B_{\alg}(Z; t)
    = B_{\rm top}(T; t) B_{\rm top}(Z; t) = B_{\rm top}(G/H; t)
  \end{align*}
  because $B_{\alg}(T; t) = B_{\rm top}(T; t) = (t^2-1)^{\dim T}$.
\end{proof}

\begin{prop}[{\cite[Theorem~1(b)]{bp02}}]
  \label{prop:egh}
  Let $H \subseteq G$ be a closed subgroup. Then one has
  \begin{align*}
    e_{\rm top}(G/H) =
    \begin{cases}
      0&\text{if $\rank G > \rank H$,} \\
      |W_G|/|W_H| &\text{otherwise,}
    \end{cases}
  \end{align*}
  where $W_G$ (resp.~$W_H$) denotes the Weyl group of $G^{\rm red}$
  (resp.~of $(H^\circ)^{\rm red}$). In particular, one has
  $e_{\rm top}(G/H) > 0$ if $H$ is a parabolic subgroup.
\end{prop}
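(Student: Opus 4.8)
The plan is to evaluate $e_{\rm top}(G/H)$ by torus localisation. The engine is the well-known principle that for an action of an algebraic torus $T$ on a complex variety $X$ one has $e_{\rm top}(X)=e_{\rm top}(X^T)$; this follows from the additivity of $e_{\rm top}$ on locally closed stratifications together with the vanishing $e_{\rm top}(T/T')=0$ for every nontrivial orbit $T/T'$. I would fix a maximal torus $T\subseteq G$ and let it act on $G/H$ by left translation. A coset $gH$ is $T$-fixed exactly when $g^{-1}Tg\subseteq H$, i.e.\ when the maximal torus $g^{-1}Tg$ of $G$ is contained in $H$. If $\rank H<\rank G$ this is impossible for dimension reasons, so $(G/H)^T=\emptyset$ and $e_{\rm top}(G/H)=0$; this case needs no reductivity or connectedness assumption.

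Assume now $\rank H=\rank G$. Replacing $H$ by a conjugate does not change $G/H$, so I may take $T\subseteq H$, making $T$ a common maximal torus. Then $g^{-1}Tg$ is a maximal torus of $H^\circ$, and by conjugacy of maximal tori in the connected group $H^\circ$ there is $h\in H^\circ$ with $(gh)^{-1}T(gh)=T$; hence $gH=(gh)H$ with $gh\in N_G(T)$, while conversely every $n\in N_G(T)$ yields a fixed coset. Thus the fixed locus is the image of $N_G(T)$ in $G/H$, which is the homogeneous space $N_G(T)/N_H(T)$ with $N_H(T)=N_G(T)\cap H$. It remains to compute $e_{\rm top}(N_G(T)/N_H(T))$.

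For this I would divide by the normal subgroup $T$: writing $\bar N_G=N_G(T)/T$ and $\bar N_H=N_H(T)/T$ one has $N_G(T)/N_H(T)=\bar N_G/\bar N_H$. The identity component of $N_G(T)$ is the centraliser $Z_G(T)$, which is connected nilpotent and is the product of $T$ with a unipotent group; hence $\bar N_G$ has unipotent identity component, each of its connected components is isomorphic to an affine space, and $e_{\rm top}(\bar N_G)=|\pi_0(\bar N_G)|=|N_G(T)/Z_G(T)|=|W_G|$, the Weyl group of $G^{\rm red}$. The same analysis carried out inside $H$ gives $e_{\rm top}(\bar N_H)=|W_H|$. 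Since $\bar N_G\to\bar N_G/\bar N_H$ is a locally trivial fibre bundle in the analytic topology and $e_{\rm top}$ is multiplicative on such bundles, I obtain $e_{\rm top}(G/H)=e_{\rm top}(\bar N_G)/e_{\rm top}(\bar N_H)=|W_G|/|W_H|$. For parabolic $H$ one has $\rank H=\rank G$, so $e_{\rm top}(G/H)=|W_G|/|W_H|>0$.

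The technical heart, and the step I expect to cause the most trouble, is this last computation in the non-reductive and especially the disconnected setting. One must check that the identity components of $N_G(T)$ and of $N_H(T)$ are unipotent modulo $T$, so that every connected component contributes Euler number $1$, and then identify the number of components with $|W_G|$, respectively $|W_H|$. The count produced by the fibration is $|N_H(T)/Z_{H^\circ}(T)|$; for connected $H$ this is exactly the order of the Weyl group of $(H^\circ)^{\rm red}$, but when $H$ is disconnected the components of $H$ meeting $N_G(T)$ must be analysed carefully in order to match the stated denominator $|W_H|$, and this bookkeeping is the crux of the argument.
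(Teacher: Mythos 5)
Your localization strategy is sound, and it is worth noting that the paper contains no proof of this proposition at all --- it is quoted from Brion--Peyre \cite{bp02} --- so your argument is genuinely self-contained rather than a variant of anything in the text. Everything up to your final step is correct: the vanishing when $\rank H < \rank G$, the identification $(G/H)^T = N_G(T)/N_H(T)$, the reduction to $\bar N_G/\bar N_H$, and $e_{\rm top}(\bar N_G) = |N_G(T)/Z_G(T)| = |W_G|$. Moreover, the bookkeeping you flag as the crux can be completed, and you should complete it, because it does \emph{not} return the printed denominator. Every connected component of $H$ meets $N_H(T)$: if $h \in H$, then $hTh^{-1}$ is a maximal torus of $H^\circ$, so $khT(kh)^{-1} = T$ for some $k \in H^\circ$, and $kh \in N_H(T) \cap hH^\circ$ since $H^\circ$ is normal in $H$. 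Hence $N_H(T) \to \pi_0(H)$ is surjective with kernel $N_{H^\circ}(T)$, and your count becomes
\[
  |N_H(T)/Z_{H^\circ}(T)| = |W_{H^\circ}| \cdot |\pi_0(H)|\text{,}
\]
where $W_{H^\circ}$ is the Weyl group of $(H^\circ)^{\rm red}$. So your method proves
\[
  e_{\rm top}(G/H) = \frac{|W_G|}{|W_{H^\circ}| \cdot |\pi_0(H)|}
\]
in the equal-rank case.

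The mismatch is not a defect of your argument: the proposition \emph{as printed} is false for disconnected $H$. Take $G = \operatorname{PGL}(2)$ and $H = N_G(T)$. Then $G/H$ is the variety of maximal tori of $G$, i.e., the space of unordered pairs of distinct points of $\Pd^1$, which is $\Pd^2$ minus a smooth conic; hence $e_{\rm top}(G/H) = 3 - 2 = 1$, while the printed formula gives $|W_G|/|W_{H^\circ}| = 2/1 = 2$. In Brion--Peyre's actual Theorem~1(b), $W_H$ denotes the Weyl group $N_H(T)/Z_H(T)$ of the full, possibly disconnected group $H$; its order is exactly $|W_{H^\circ}| \cdot |\pi_0(H)|$, because in characteristic zero every algebraic subgroup of a unipotent group is connected, so $Z_H(T)/T \subseteq Z_G(T)/T$ is connected and therefore $Z_H(T) = Z_{H^\circ}(T)$. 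With that reading of $W_H$, your completed argument is a correct proof of the cited result, and it coincides with the statement printed here precisely when $H$ is connected. None of this affects how the proposition is used later in the paper: there one only needs $e_{\rm top}(G/H) \ge 0$ for arbitrary orbits and $e_{\rm top}(G/H) > 0$ for projective orbits, i.e., for parabolic --- hence connected, equal-rank --- subgroups $H$, and both conclusions survive the extra factor $|\pi_0(H)|$.
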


\begin{theorem}
  \label{th:fo-0}
  Let $X$ be a $\Qd$-Gorenstein projective $G$-variety with at worst
  log-terminal singularities. Assume that there exists a
  $G$-equivariant log-desingularization $\rho\colon Y \to X$ such that
  $Y$ contains only finitely many $G$-orbits. Then
  $e_{\alg}^{\st}(X) > 0$.
\end{theorem}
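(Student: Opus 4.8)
The plan is to evaluate the defining sum for $e^{\st}_{\alg}(X)$ stratum by stratum and to show that every summand is nonnegative while at least one is strictly positive. First I would rewrite the invariant in the form adapted to the open strata $D_J^\circ$. Passing to the limit $\Lv \to 1$ in the first expression of Definition~\ref{def:str} and using that $\lim_{\Lv\to 1}\frac{\Lv-1}{\Lv^{a_j+1}-1}=\frac{1}{a_j+1}$, one obtains
\[
  e^{\st}_{\alg}(X) = \sum_{\emptyset \subseteq J \subseteq I}
  \rleft(\prod_{j\in J}\frac{1}{a_j+1}\rright) e_{\alg}(D_J^\circ)\text{.}
\]
Since $X$ is log-terminal we have $a_j > -1$, hence $a_j+1 > 0$, so every coefficient $c_J \coloneqq \prod_{j\in J}\frac{1}{a_j+1}$ is strictly positive. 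It therefore suffices to control the signs of the virtual numbers $e_{\alg}(D_J^\circ)$.

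Next I would exploit the group action. As $G$ is connected and $\rho$ is $G$-equivariant, the exceptional divisors $D_1, \dots, D_k$ are $G$-invariant, and hence so is each locally closed stratum $D_J^\circ$; moreover the strata $\{D_J^\circ\}_{J \subseteq I}$ form a partition of $Y$. By hypothesis $Y$ has only finitely many $G$-orbits, so each $D_J^\circ$ is a finite disjoint union of orbits $G/H$. Because $e_{\alg}$ is the specialization at $t=1$ of the $\Zd[\Lv]$-module homomorphism $B_{\alg}$, it is additive, giving $e_{\alg}(D_J^\circ) = \sum_{G/H \subseteq D_J^\circ} e_{\alg}(G/H)$. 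By the equality $e_{\alg}(G/H) = e_{\rm top}(G/H)$ established above for all closed subgroups $H$, together with Proposition~\ref{prop:egh}, each term satisfies $e_{\alg}(G/H) = e_{\rm top}(G/H) \geq 0$. Consequently $e_{\alg}(D_J^\circ) \geq 0$ for every $J$, and every summand $c_J\, e_{\alg}(D_J^\circ)$ is nonnegative, so $e^{\st}_{\alg}(X) \geq 0$.

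Finally, for strict positivity I would produce one orbit with positive Euler number. The $G$-variety $Y$ contains a closed orbit $O = G/H$, and since $Y$ is projective, $O$ is complete, so $G/H$ is a complete homogeneous space. By Borel's fixed point theorem a Borel subgroup of $G$ has a fixed point on $O$, so a conjugate of it is contained in $H$; in particular $H$ contains a maximal torus of $G$, whence $\rank G = \rank H$ in the sense of Proposition~\ref{prop:egh}, and therefore $e_{\rm top}(G/H) = |W_G|/|W_H| > 0$. Since $O$ is a single orbit and the strata are $G$-invariant, $O$ lies in exactly one stratum $D_{J_0}^\circ$, so $e_{\alg}(D_{J_0}^\circ) \geq e_{\rm top}(G/H) > 0$, and hence $c_{J_0}\, e_{\alg}(D_{J_0}^\circ) > 0$. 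Combined with the nonnegativity of all other summands this yields $e^{\st}_{\alg}(X) > 0$.

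I expect the delicate point to be precisely this strict positivity: the nonnegativity is a formal consequence of additivity of $e_{\alg}$ together with the Brion--Peyre vanishing/positivity, but strict positivity needs the geometric input that a projective $G$-variety carries a complete orbit whose stabilizer has full reductive rank, which is what forces a strictly positive contribution via Proposition~\ref{prop:egh}. The remaining steps---the limit computation and the reduction to orbits---are routine given Theorem~\ref{th:str} and the cited results of Brion and Peyre.
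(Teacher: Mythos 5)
Your proof is correct and follows essentially the same route as the paper: expand $e^{\st}_{\alg}(X)$ over the strata $D_J^\circ$, deduce $e_{\alg}(D_J^\circ)\ge 0$ from the finiteness of $G$-orbits together with Proposition~\ref{prop:egh}, and then exhibit one stratum with strictly positive contribution. The only (immaterial) difference is how that stratum is located: the paper takes a maximal nonempty $D_J$, so that $D_J^\circ=D_J$ is projective and contains a projective orbit, whereas you start from a closed orbit of $Y$ and pass to the unique stratum containing it---both reduce to $e_{\rm top}(G/H)>0$ for a parabolic stabilizer $H$.
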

\begin{proof}
  We write
  \begin{align*}
    K_Y = \rho^*K_{X} + \sum_{i=1}^k a_iD_i
  \end{align*}
  for some $G$-invariant smooth irreducible divisors $D_1,\dots,D_k$.
  By Definition~\ref{def:str}, we have
  \begin{align*}
    e_{\alg}^{\st}(X) &= \sum_{\emptyset \subseteq J \subseteq I}
    e_{\alg}(D_J^\circ)\prod_{j \in J}\frac{1}{a_j+1}\text{.}
  \end{align*}
  Since the $G$-invariant locally closed subvariety $D^\circ_J$ is a
  union of finitely many $G$-orbits, by Proposition~\ref{prop:egh}, we
  obtain $e_{\alg}(D_J^\circ) \ge 0$ for all $J \subseteq I$. Together
  with the inequalities $a_j +1 >0$ for all $j \in I$ it implies
  $e_{\alg}^{\st}(X) \geq 0$. Choose a subset $J \subseteq I$ with
  $D_J^\circ = D_J \ne \emptyset$. As $D_J^\circ$ contains a
  projective $G$-orbit, by Proposition~\ref{prop:egh}, we obtain
  $e_{\alg}(D_J^\circ) > 0$ and, hence, the strict inequality
  $e_{\alg}^{\st}(X) > 0$.
\end{proof}

\begin{theorem}
  \label{th:fo-flip}
  Let $g \colon X \dashrightarrow X^+$ be a $G$-equivariant Mori flip
  with $G$-equivariant log-desingularizations $\rho \colon Y \to X$
  and $\rho_{+} \colon Y \to X^+$ as well as a $G$-equivariant
  commutative diagram \[ \xymatrix{   & Y\ar[ld]_{\rho} \ar[rd]^{\rho_+} & \\
    X \ar@{-->}[rr]^g & & X^+} \]
  such that $Y$ contains only finitely many $G$-orbits. Then we have
  $e_{\alg}^{\st}(X) > e_{\alg}^{\st}(X^+)$.
\end{theorem}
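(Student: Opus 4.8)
The plan is to compute both stringy invariants on the single common log-desingularization $Y$ furnished by the hypothesis, so that they are expressed over the \emph{same} stratification $\{D_J^\circ\}_{\emptyset \subseteq J \subseteq I}$ but with two different systems of discrepancies. Taking $D_1,\dots,D_k$ to include all prime divisors exceptional over $X$ or over $X^+$, I would write
\[ K_Y = \rho^* K_X + \sum_{i=1}^k a_iD_i, \qquad K_Y = \rho_+^* K_{X^+} + \sum_{i=1}^k a_i^+ D_i, \]
so that Definition~\ref{def:str} gives
\[ e_{\alg}^{\st}(X) - e_{\alg}^{\st}(X^+) = \sum_{\emptyset \ne J \subseteq I} e_{\alg}(D_J^\circ)\rleft(\prod_{j \in J}\frac{1}{a_j+1} - \prod_{j \in J}\frac{1}{a_j^+ +1}\rright)\text{,} \]
the terms with $J = \emptyset$ cancelling. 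The theorem then reduces to two positivity statements: that each factor $e_{\alg}(D_J^\circ)$ is nonnegative, and that each bracketed difference is nonnegative, with the product of some pair being strictly positive.

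The nonnegativity of $e_{\alg}(D_J^\circ)$ is immediate from Proposition~\ref{prop:egh}, exactly as in the proof of Theorem~\ref{th:fo-0}, since $D_J^\circ$ is a union of finitely many $G$-orbits. The core of the argument is the comparison $a_j \le a_j^+$, for which I would invoke the negativity lemma. Let $f\colon X \to Z$ and $f^+\colon X^+ \to Z$ be the contractions underlying the flip, set $h = f\circ \rho = f^+\circ \rho_+$, and put $F \coloneqq \rho^* K_X - \rho_+^* K_{X^+} = \sum_i (a_i^+ - a_i)D_i$ on $Y$. For a curve $C$ contracted by $\rho$ one has $(\rho^* K_X \cdot C)=0$, while $(\rho_+^* K_{X^+}\cdot C) = (K_{X^+}\cdot \rho_{+*}C) \ge 0$, because $\rho_{+*}C$ is contracted by $f^+$ and $K_{X^+}$ is $f^+$-ample; hence $-F$ is $\rho$-nef. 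Since $g$ is an isomorphism in codimension one, the canonical classes agree there and $\rho_* F = K_X - (g^{-1})_* K_{X^+} = 0$, so the negativity lemma forces $F \ge 0$, that is, $a_j \le a_j^+$ for every $j$. Together with log-terminality ($a_j+1,\, a_j^+ +1 > 0$) this yields $\frac{1}{a_j+1}\ge \frac{1}{a_j^+ +1} > 0$, so every bracket is nonnegative and $e_{\alg}^{\st}(X) \ge e_{\alg}^{\st}(X^+)$.

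For the strict inequality I would isolate one stratum contributing positively. Since the flip is nontrivial, $g$ is not an isomorphism, and the second part of the negativity lemma (equivalently, the standard fact that a flip strictly increases the discrepancy of every divisor with center in the flipping locus) produces an index $j_0$ with $a_{j_0} < a_{j_0}^+$. The prime divisor $D_{j_0}$ is a nonempty $G$-invariant projective subvariety of $Y$, hence contains a closed $G$-orbit $O$; being projective and homogeneous, $O \cong G/P$ with $P$ parabolic, so $e_{\alg}(O) = e_{\rm top}(G/P) > 0$ by Proposition~\ref{prop:egh}. Let $J$ be the unique index set with $O \subseteq D_J^\circ$; as $O \subseteq D_{j_0}$, necessarily $j_0 \in J$. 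Then $e_{\alg}(D_J^\circ) \ge e_{\alg}(O) > 0$ by additivity of $e_{\alg}$ over the orbit decomposition, and the corresponding bracket is strictly positive because the factor $\frac{1}{a_{j_0}+1} > \frac{1}{a_{j_0}^+ +1}$ is strict while the others satisfy $\ge$. This single term is positive and all remaining terms are nonnegative, giving $e_{\alg}^{\st}(X) > e_{\alg}^{\st}(X^+)$. I expect the main obstacle to be the discrepancy comparison: arranging the negativity lemma so that $-F$ is genuinely $\rho$-nef, and extracting the \emph{strict} increase $a_{j_0} < a_{j_0}^+$ from the two ampleness conditions of the flip.
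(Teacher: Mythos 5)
Your proposal is correct and follows the same architecture as the paper's proof: compute both invariants on the common resolution $Y$, get $e_{\alg}(D_J^\circ)\ge 0$ from Proposition~\ref{prop:egh} and the finiteness of $G$-orbits exactly as in Theorem~\ref{th:fo-0}, compare the two discrepancy systems, and isolate one stratum with $j_0\in J$ contributing strictly (your choice of the stratum through a closed $G$-orbit $O\subseteq D_{j_0}$ is interchangeable with the paper's choice of a maximal $J\ni j_0$ with $D_J^\circ=D_J\ne\emptyset$). The one substantive difference is the discrepancy comparison: the paper simply cites \cite[Lemma~3.4]{is05} for both $a_j\le a_j^+$ and the existence of $j_0$ with $a_{j_0}<a_{j_0}^+$, whereas you re-prove the weak inequality via the negativity lemma; that derivation ($-F$ is $\rho$-nef because $K_{X^+}$ is $f^+$-ample on curves contracted over $Z$, and $\rho_*F=0$ since $g$ is an isomorphism in codimension one) is the standard proof of the cited lemma and is correct, so your write-up is self-contained where the paper outsources. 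The only loose point is your strictness step: the fact that \enquote{every divisor with center in the flipping locus has strictly larger discrepancy} does not by itself produce $j_0$ among the \emph{fixed} divisors $D_1,\dots,D_k$, since a priori none of them need have center contained in $\operatorname{Exc}(f)$. This is easily repaired inside your framework: let $\tilde C\subseteq Y$ be the strict transform of a flipping curve $C$; then $(F\cdot\tilde C)=(K_X\cdot C)-(K_{X^+}\cdot\rho_{+*}\tilde C)<0$ because the first term is negative ($-K_X$ is $f$-ample) and the second is nonnegative ($K_{X^+}$ is $f^+$-ample), and since $F$ is effective this forces $\tilde C\subseteq\operatorname{Supp}(F)$, so some $D_{j_0}$ containing $\tilde C$ satisfies $a_{j_0}<a_{j_0}^+$. (Alternatively: if $F=0$, then $\rho$ and $\rho_+$ contract the same curves, so by rigidity $g$ would be an isomorphism, contradicting nontriviality of the flip.) With that patch your argument is complete and, if anything, more informative than the paper's citation-based step.
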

\begin{proof}
  Write
  \begin{align*}
    K_Y = \rho^*K_{X} + \sum_{i=1}^k a_iD_i
    = \rho_+^*K_{X^+} + \sum_{i=1}^k a_i^+D_i
  \end{align*}
  for some $G$-invariant smooth irreducible divisors $D_1,\dots,D_k$.
  By Definition~\ref{def:str}, we have
  \begin{align*}
    e_{\alg}^{\st}(X) &= \sum_{\emptyset \subseteq J \subseteq I}
    e_{\alg}(D_J^\circ)\prod_{j \in J}\frac{1}{a_j+1}\text{,}\\
    e_{\alg}^{\st}(X^+) &= \sum_{\emptyset \subseteq J \subseteq I}
    e_{\alg}(D_J^\circ)\prod_{j \in J}\frac{1}{a^+_j+1}\text{.}
  \end{align*}
  According to \cite[Lemma~3.4]{is05}, we have $a_j \le a^+_j$ for
  every $j \in I$ and there exists $j_0 \in I$ such that
  $a_{j_0} < a^+_{j_0}$. Since the $G$-invariant locally closed
  subvariety $D^\circ_J$ is a union of finitely many $G$-orbits, by
  Proposition~\ref{prop:egh}, we have $e_{\alg}(D_J^\circ) \ge 0$ for
  all subsets $J \subseteq I$. Thus, we obtain
  $e_{\alg}^{\st}(X) \ge e_{\alg}^{\st}(X^+)$. Choose a subset
  $J \subseteq I$ with $j_0 \in J$ and
  $D_J^\circ = D_J \ne \emptyset$. As $D_J^\circ$ contains a
  projective $G$-orbit, by Proposition~\ref{prop:egh}, we obtain
  $e_{\alg}(D_J^\circ) > 0$ and, hence, the strict inequality
  $e_{\alg}^{\st}(X^-) > e_{\alg}^{\st}(X^+)$.
\end{proof}

\begin{theorem}
  \label{th:fo-divc}
  Let $f\colon X \to X'$ be a $G$-equivariant divisorial Mori
  contraction with a $G$-equivariant log-desingularization
  $\rho \colon Y \to X$ such that $Y$ contains only finitely many
  $G$-orbits. Then $e^{\alg}_{\st}(X) > e^{\alg}_{\st}(X')$.
\end{theorem}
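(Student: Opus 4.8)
The plan is to reduce the statement to the same bookkeeping as in the proof of Theorem~\ref{th:fo-flip}, except that now the two varieties $X$ and $X'$ are resolved by one and the same smooth $G$-variety $Y$. First I would set $\rho' \coloneqq f \circ \rho \colon Y \to X'$. Since $f$ contracts the divisor $D \subseteq X$ and $\rho$ is an isomorphism over the generic point of $D$, the strict transform $\tilde D$ of $D$ is a $G$-invariant prime divisor on $Y$ which is exceptional for $\rho'$ but not for $\rho$. Choosing the log-desingularization so that the divisors $D_1, \dots, D_k$ include $\tilde D = D_{i_0}$ and form a simple normal crossing divisor covering the exceptional loci of both $\rho$ and $\rho'$, I obtain a single stratification $\{D_J^\circ\}$ that computes both invariants via Definition~\ref{def:str}:
\[
e_{\alg}^{\st}(X) = \sum_{\emptyset \subseteq J \subseteq I} e_{\alg}(D_J^\circ)\prod_{j \in J}\frac{1}{a_j+1}, \qquad e_{\alg}^{\st}(X') = \sum_{\emptyset \subseteq J \subseteq I} e_{\alg}(D_J^\circ)\prod_{j \in J}\frac{1}{a_j'+1},
\]
where $K_Y = \rho^* K_X + \sum_i a_i D_i = (\rho')^* K_{X'} + \sum_i a_i' D_i$ and $a_{i_0} = 0$, because $\tilde D$ is not $\rho$-exceptional.

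The key step is the discrepancy comparison $a_i \le a_i'$ for all $i$, together with a strict inequality for $i = i_0$; this plays the role that \cite[Lemma~3.4]{is05} played in the flip case. Since both $K_X$ and $f^* K_{X'}$ are $\Q$-Cartier, so is their difference, and I may write $K_X = f^* K_{X'} + a D$ for a well-defined $a \in \Q$. Intersecting with a curve $C$ contracted by $f$ and using that $-K_X$ is $f$-ample (so $K_X \cdot C < 0$), that $f^* K_{X'} \cdot C = 0$, and that $D \cdot C < 0$ for a divisorial contraction, I get $a > 0$. Pulling back by $\rho$ then gives $\sum_i (a_i' - a_i) D_i = \rho^*(K_X - f^* K_{X'}) = a\,\rho^* D$; as $\rho^* D$ is effective with coefficient $1$ along $\tilde D = D_{i_0}$ and nonnegative coefficients along the $\rho$-exceptional $D_i$, I conclude $a_i' \ge a_i$ for all $i$ and $a_{i_0}' = a > 0 = a_{i_0}$.

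Finally I would assemble the inequality exactly as in Theorem~\ref{th:fo-flip}. Each factor satisfies $\frac{1}{a_j+1} \ge \frac{1}{a_j'+1} > 0$, and by Proposition~\ref{prop:egh} every $e_{\alg}(D_J^\circ) \ge 0$, since $D_J^\circ$ is a union of finitely many $G$-orbits; hence $e_{\alg}^{\st}(X) \ge e_{\alg}^{\st}(X')$ term by term. For strictness, I would choose $J$ maximal among the subsets containing $i_0$ with $D_J \ne \emptyset$; by maximality $D_J^\circ = D_J$, and this set is projective, $G$-invariant, and contains a closed, hence projective, $G$-orbit, so Proposition~\ref{prop:egh} gives $e_{\alg}(D_J) > 0$. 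Because $i_0 \in J$ and $a_{i_0} < a_{i_0}'$, the corresponding product is strictly larger, so the $J$-term is strictly positive and $e_{\alg}^{\st}(X) > e_{\alg}^{\st}(X')$ follows. I expect the only genuine subtlety to be the discrepancy computation, namely verifying $a > 0$ and the effectivity of $\rho^* D$; everything else is a transcription of the flip argument.
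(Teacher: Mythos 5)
Your proof is correct, and its overall skeleton is the paper's: compute both invariants via Definition~\ref{def:str} on the common resolution $Y$, get termwise nonnegativity from Proposition~\ref{prop:egh} because each $D_J^\circ$ is a finite union of $G$-orbits, and get strictness from a stratum with $D_J^\circ = D_J \ne \emptyset$ containing a projective orbit. The genuine difference lies in the key discrepancy comparison: the paper simply asserts $a_j \le a_j'$ with strictness at some $j_0$ (extending the citation of \cite[Lemma~3.4]{is05} from the flip case), whereas you prove it directly from the structure of a divisorial contraction, writing $K_X = f^*K_{X'} + aD$ with $a > 0$ and pulling back by $\rho$, which moreover identifies the strict index concretely as the strict transform $\tilde D = D_{i_0}$ with $a_{i_0} = 0 < a = a_{i_0}'$. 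This buys self-containedness, at the cost of two points you should order carefully: to make sense of $D \cdot C$ you need $D$ to be $\Qd$-Cartier, which follows only once $a \ne 0$ (itself forced by $f$-ampleness of $-K_X$, since $a = 0$ would give $K_X \cdot C = 0$), so one should intersect the $\Qd$-Cartier divisor $aD = K_X - f^*K_{X'}$ with $C$ first; and the existence of a contracted curve with $D \cdot C < 0$ is the negativity lemma, not a formality, since the paper's definition of divisorial Mori contraction does not assume extremality --- alternatively, apply the negativity lemma directly to $-aD = f^*K_{X'} - K_X$, which is $f$-ample, hence $f$-nef, and pushes forward to zero, yielding $a \ge 0$ without any curve. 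A small bonus: your maximality argument for the choice of $J$ justifies a step the paper's flip proof leaves implicit, namely why a subset $J \ni j_0$ with $D_J^\circ = D_J \ne \emptyset$ exists.
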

\begin{proof}
  We write
  \begin{align*}
    K_Y = \rho^*(K_{X}) + \sum_{i=1}^k a_iD_i
    = \rho^*f^*K_{X'} + \sum_{i=1}^k a'_iD_i
  \end{align*}
  for some $G$-invariant smooth irreducible divisors $D_1,\dots,D_k$.
  By Definition~\ref{def:str}, we have
  \begin{align*}
    e_{\alg}^{\st}(X) &= \sum_{\emptyset \subseteq J \subseteq I}
    e_{\alg}(D_J^\circ)\prod_{j \in J}\frac{1}{a_j+1}\text{,}\\
    e_{\alg}^{\st}(X') &= \sum_{\emptyset \subseteq J \subseteq I}
    e_{\alg}(D_J^\circ)\prod_{j \in J}\frac{1}{a'_j+1}\text{.}
  \end{align*}
  Again we have $a_j \le a'_j$ for every $j \in I$ and there exists
  $j_0 \in I$ such that $a_{j_0} < a'_{j_0}$. The rest of the proof is
  the same as in the proof of Theorem~\ref{th:fo-flip}.
\end{proof}

\begin{remark}[Proof of Theorem~\ref{spher}]
  Spherical varieties have only finitely many $G$-orbits (see, for
  instance, \cite[Theorem~2.1.2]{Per}). Moreover, the
  desingularizations of spherical varieties required in
  Theorems~\ref{th:fo-0}, \ref{th:fo-flip}, and $\ref{th:fo-divc}$ can
  be always constructed explicitly using subdivisions of the
  corresponding colored fans (see, for instance, \cite[Theorem~3.1.13
  and Corollary~3.3.8]{Per}).
\end{remark}

\begin{remark}
  As in \cite{bat99}, it is possible to generalize
  Definition~\ref{def:str} to define stringy invariants for any
  Kawamata log-terminal pair $(X, \Delta)$.

  For example, let $\rho\colon Y \to X$ be a log-desingularization of
  $(X, \Delta)$ together with smooth irreducible divisors
  $D_1, \dots, D_k$ with simple normal crossings whose support covers
  the exceptional locus of $\rho$ and the support of $\rho^*\Delta$.
  We can uniquely write
  \begin{align*}
    K_Y = \rho^*(K_X + \Delta) + \sum_{i=1}^k a_iD_i
  \end{align*}
  for some rational numbers $a_i \in \frac{1}{r}\Z$ satisfying the
  additional condition that $-a_i$ is the multiplicity of $D_i$ in
  $\Delta$ if $D_i$ is not in the exceptional locus of $\rho$. We then
  define
  \begin{align*}
    e_{\alg}^{\st}(X, \Delta) \coloneqq
    \sum_{\emptyset \subseteq J \subseteq I}
    e_{\rm alg}^{\st}(D_j^\circ) \prod_{j \in J}\frac{1}{a_j+1} \in \Qd\text{.}
  \end{align*}
\end{remark}

\begin{conj}
  The algebraic stringy Euler number $e^{\rm str}_{\rm alg}(X,\Delta)$
  is strictly monotone with respect to the elementary birational
  transformations from the Mori theory of Kawamata log-terminal pairs
  $(X, \Delta)$.
\end{conj}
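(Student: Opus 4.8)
The plan is to reduce the pair version to the arguments already used in the proofs of Theorems~\ref{th:fo-flip} and \ref{th:fo-divc}, now carried out for a $G$-invariant boundary $\Delta$ and for $G$-varieties admitting a $G$-equivariant log-desingularization of the pair $(X,\Delta)$ with only finitely many $G$-orbits; this is the natural hypothesis under which the method can work, in particular for spherical $X$ and $G$-invariant $\Delta$. Given an elementary transformation of Kawamata log-terminal pairs taken with respect to $K_X+\Delta$ --- a $G$-equivariant divisorial contraction $f\colon X \to X'$ or a $G$-equivariant flip $g\colon X \dashrightarrow X^+$ --- I would first choose a common smooth $G$-variety $Y$ dominating both members through $G$-equivariant log-desingularizations that simultaneously resolve the exceptional loci and the supports of the transforms of $\Delta$, and such that $Y$ carries only finitely many $G$-orbits. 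The same $G$-invariant smooth irreducible divisors $D_1,\dots,D_k$ then serve for both sides of the transformation.

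For the flip I would record the two discrepancy expansions of the pair,
\[ K_Y = \rho^*(K_X+\Delta) + \sum_{i=1}^k a_i D_i = \rho_+^*(K_{X^+}+\Delta^+) + \sum_{i=1}^k a_i^+ D_i\text{,} \]
where $\Delta^+ = g_*\Delta$, and use the definition of $e_{\alg}^{\st}(X,\Delta)$ recalled above to obtain the term-by-term comparison of $e_{\alg}^{\st}(X,\Delta)$ and $e_{\alg}^{\st}(X^+,\Delta^+)$, in which every stratum $D_J^\circ$ is shared because $Y$ and the $D_i$ are shared. Since both pairs are klt, all discrepancies satisfy $a_j+1>0$ and $a_j^++1>0$, so every factor $1/(a_j+1)$ and $1/(a_j^++1)$ is positive. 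The divisorial case is identical, with $\rho_+$ replaced by $\rho^* f^*$ acting on $K_{X'}+\Delta'$, exactly as in the passage from Theorem~\ref{th:fo-flip} to Theorem~\ref{th:fo-divc}.

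The two analytic inputs are then as before. For the coefficients I would invoke the negativity lemma of the log minimal model program --- the klt-pair analogue of \cite[Lemma~3.4]{is05} --- which, using that $-(K_X+\Delta)$ is relatively ample for the contraction and $K_{X^+}+\Delta^+$ is relatively ample after the flip, yields $a_j \le a_j^+$ for all $j\in I$ and a strict inequality $a_{j_0}<a_{j_0}^+$ for some $j_0$. For the Euler characteristics, each $D_J^\circ$ is a $G$-invariant locally closed subvariety that is a union of finitely many $G$-orbits, so Proposition~\ref{prop:egh} gives $e_{\alg}(D_J^\circ)\ge 0$. These two facts force $e_{\alg}^{\st}(X,\Delta) \ge e_{\alg}^{\st}(X^+,\Delta^+)$; to make the inequality strict I would pick $J \ni j_0$ with $D_J = D_J^\circ \ne \emptyset$ containing a projective $G$-orbit, whence $e_{\alg}(D_J^\circ)>0$ and the $j_0$-factor makes that summand strictly larger on the $X$ side.

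I expect the real obstacle to sit in the two places where the pair enters the group-theoretic setup. First, one must establish a common $G$-equivariant log-resolution of $(X,\Delta)$ retaining finitely many $G$-orbits: for spherical $X$ this should follow from subdivisions of colored fans once $\Delta$ is $G$-invariant, but it must be verified that resolving $\Supp\Delta$ does not create infinitely many orbits. Second, one must guarantee that some stratum $D_J^\circ$ with $j_0\in J$ is nonempty, closed, and contains a projective $G$-orbit, which is precisely what produces the strict inequality. Without the finite-orbit hypothesis the sign control $e_{\alg}(D_J^\circ)\ge 0$ supplied by Proposition~\ref{prop:egh} is lost, and this is why the statement remains a conjecture in full generality rather than a theorem in the $G$-equivariant case.
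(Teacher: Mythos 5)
The statement you were asked to prove is left as an open conjecture in the paper: the authors never prove strict monotonicity for arbitrary Kawamata log-terminal pairs, but only assert (with a sketch, as a generalization of Theorems~\ref{th:fo-flip} and~\ref{th:fo-divc}) the $G$-equivariant special case for spherical $X$ and $G$-invariant $\Delta$, which appears as the theorem immediately following the conjecture. Your proposal correctly recognizes exactly this situation and reproduces, for the provable equivariant case, essentially the paper's own argument --- a common $G$-equivariant log-resolution of the pair with finitely many orbits, pair discrepancies compared via the negativity lemma, $e_{\alg}(D_J^\circ) \ge 0$ from Proposition~\ref{prop:egh}, and strictness from a projective orbit in a closed stratum $D_J = D_J^\circ$ with $j_0 \in J$ --- while accurately identifying the finite-orbit hypothesis as the reason the general statement remains conjectural.
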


It is possible to prove generalized versions of
Theorems~\ref{th:fo-0}, \ref{th:fo-flip}, and $\ref{th:fo-divc}$ for
Kawamata log-terminal pairs $(X, \Delta)$ where the irreducible
components of $\Delta$ are $G$-invariant. In particular, we obtain the
following result.

\begin{theorem}
  The algebraic stringy Euler number $e^{\rm str}_{\rm alg}(X,\Delta)$
  is strictly monotone with respect to the elementary $G$-equivariant
  birational transformations from the $G$-equivariant Mori theory of
  Kawamata log-terminal pairs $(X, \Delta)$ for projective spherical
  varieties $X$ and $G$-invariant divisors $\Delta$.
\end{theorem}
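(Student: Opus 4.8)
The plan is to adapt the proofs of Theorems~\ref{th:fo-0}, \ref{th:fo-flip}, and \ref{th:fo-divc} to the logarithmic setting. The two essential ingredients there were: (i) the expression of $e^{\st}_{\alg}$ as a sum $\sum_{J} e_{\alg}(D^\circ_J)\prod_{j\in J}\tfrac{1}{a_j+1}$ in which every factor satisfies $a_j+1>0$ by the log-terminal hypothesis; and (ii) the nonnegativity $e_{\alg}(D^\circ_J)\ge 0$, which followed from $D^\circ_J$ being a union of finitely many $G$-orbits together with Proposition~\ref{prop:egh} and the equality $e_{\alg}(G/H)=e_{\rm top}(G/H)$. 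Both ingredients survive for pairs as soon as the resolution is chosen $G$-equivariantly.

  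First I would fix a $G$-equivariant log-desingularization $\rho\colon Y\to X$ of the pair $(X,\Delta)$ as in the preceding remark, with $G$-invariant smooth irreducible divisors $D_1,\dots,D_k$ in simple normal crossing position whose union covers both the exceptional locus of $\rho$ and the support of $\rho^*\Delta$. Since the irreducible components of $\Delta$ are $G$-invariant and $X$ is spherical, such a resolution exists and may be taken with only finitely many $G$-orbits, by subdivision of colored fans exactly as in the proof of Theorem~\ref{spher}. Writing $K_Y=\rho^*(K_X+\Delta)+\sum_i a_iD_i$ with $a_i>-1$, every stratum $D^\circ_J$ is $G$-invariant, hence a union of finitely many $G$-orbits, so that $e_{\alg}(D^\circ_J)\ge 0$. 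As each factor $\tfrac{1}{a_j+1}$ is positive, this gives $e^{\st}_{\alg}(X,\Delta)\ge 0$, and choosing $J$ with $D^\circ_J=D_J\ne\emptyset$ containing a projective $G$-orbit yields the strict inequality $e^{\st}_{\alg}(X,\Delta)>0$, the pair analogue of Theorem~\ref{th:fo-0}.

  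For the monotonicity statements I would, given a $G$-equivariant flip $g\colon X\dashrightarrow X^+$ or a $G$-equivariant divisorial contraction $f\colon X\to X'$ of the pair, pass to a common $G$-equivariant log-resolution $Y$ dominating both sides, with $G$-invariant divisors $D_1,\dots,D_k$ covering all the relevant exceptional loci and the supports of the pulled-back boundaries, and again with finitely many $G$-orbits. Expanding $K_Y=\rho^*(K_X+\Delta)+\sum_i a_iD_i=\rho_+^*(K_{X^+}+\Delta^+)+\sum_i a_i^+D_i$ produces the two formulas for $e^{\st}_{\alg}$, and the termwise comparison reduces, exactly as in the proof of Theorem~\ref{th:fo-flip}, to the discrepancy inequalities $a_j\le a_j^+$ for all $j\in I$ with $a_{j_0}<a_{j_0}^+$ for some $j_0$. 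Together with $e_{\alg}(D^\circ_J)\ge 0$ for every $J$ and the strict positivity contributed by a subset $J\ni j_0$ meeting a projective $G$-orbit, this gives $e^{\st}_{\alg}(X,\Delta)>e^{\st}_{\alg}(X^+,\Delta^+)$, and symmetrically for the divisorial contraction.

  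The main obstacle is the discrepancy comparison $a_j\le a_j^+$ with a strict drop at some index. In the absolute case this was \cite[Lemma~3.4]{is05}; for pairs the corresponding statement is that the log-discrepancies of a Kawamata log-terminal pair do not decrease under an elementary step of the $(X,\Delta)$-Mori program and strictly increase along a divisor actually modified by it. I would invoke this directly, or re-derive it from the relative negativity of $K_X+\Delta$ (respectively the relative positivity of $K_{X^+}+\Delta^+$), which is precisely what characterizes the flip and the divisorial contraction of the pair. The remaining equivariance bookkeeping---that $G$-invariance of the components of $\Delta$ propagates through the colored-fan desingularization so that each $D^\circ_J$ is a union of $G$-orbits and Proposition~\ref{prop:egh} applies---is routine for spherical varieties.
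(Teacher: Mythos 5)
Your proposal is correct and follows essentially the same route as the paper: the paper's (unwritten) proof is precisely the generalization of Theorems~\ref{th:fo-0}, \ref{th:fo-flip}, and \ref{th:fo-divc} to klt pairs with $G$-invariant boundary components, combined with the colored-fan construction of $G$-equivariant log-desingularizations with finitely many orbits, which is exactly what you flesh out. Your handling of the two key ingredients---nonnegativity of $e_{\alg}(D_J^\circ)$ via Proposition~\ref{prop:egh} and the log-discrepancy comparison (the pair analogue of \cite[Lemma~3.4]{is05}, re-derivable from relative (anti)ampleness of $K_X+\Delta$)---matches the paper's intended argument.
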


\begin{remark}
  Let $B\subseteq G$ be a Borel subgroup. Since every divisor on a
  spherical variety is linearly equivalent to a $B$-invariant divisor,
  it is natural to consider the $B$-equivariant Mori theory of pairs
  $(X, \Delta)$ on spherical $G$-varieties $X$. Irreducible
  $B$-invariant divisors on $X$ that are not $G$-invariant are called
  \emph{colors}. As in the case of toric varieties, $B$-invariant
  Cartier divisors can be described by piecewise linear functions on
  the colored fan of a spherical variety. It is possible that the
  irreducible $B$-invariant divisors (colors) have singularities, even
  in the open $G$-orbit, and even if all colors are smooth, they may
  not satisfy the normal crossing condition. The $B$-equivariant
  log-desingularizations of $B$-invariant divisors on horospherical
  varieties have been constructed by Pasquier in
  \cite{Pas15-Sing_horo} using Bott-Samelson resolutions of Schubert
  cells in homogeneous spaces. We remark that every spherical variety
  $X$ contains only finitely many $B$-orbits. However, a
  $B$-equivariant log-desingularization of $B$-invariant pairs on a
  spherical variety $X$ having only finitely many $B$-orbits may not
  exist in general.
\end{remark}

\begin{prop}
  Consider the spherical $\operatorname{SL}(2)$-variety
  $X \coloneqq \Pd(\mathfrak{sl}_2) \cong \Pd^2$, \ie the
  projectivization of the adjoint representation of
  $\operatorname{SL}(2)$. Then the set of $B$-invariant divisors in
  $X$ does not admit a $B$-equivariant log-desingularization with
  finitely many $B$-orbits.
\end{prop}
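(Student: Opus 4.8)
The plan is to pin down the two $B$-invariant prime divisors on $X=\Pd(\mathfrak{sl}_2)$ and to show that resolving their tangency forces the creation of a $\Pd^1$ of $B$-fixed points. Writing elements of $\mathfrak{sl}_2$ as $\left(\begin{smallmatrix} a & b\\ c & -a\end{smallmatrix}\right)$, so that $X=\Pd^2$ has homogeneous coordinates $[a:b:c]$, the $G$-invariant conic $C=\{a^2+bc=0\}$ (the projectivised nilpotent cone) is the unique closed $G$-orbit. Taking $B\subseteq\operatorname{SL}(2)$ to be the upper triangular Borel, a direct computation of the adjoint action shows that the line $L=\{c=0\}=\Pd(\mathfrak{b})$ is $B$-invariant but not $G$-invariant, and that the only $B$-invariant prime divisors of $X$ are $C$ and $L$. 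Hence the set of $B$-invariant divisors is $C+L$, and I would first record the two geometric facts that drive everything: $C$ and $L$ are both smooth and meet only at the point $P=[0:1:0]$, where $L$ is tangent to $C$ with contact order exactly $2$; and $P$ is the unique $B$-fixed point of $X$ (the $B$-orbit stratification being the open orbit, $C\setminus\{P\}$, $L\setminus\{P\}$, and $\{P\}$).

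Next I would reduce to a statement about iterated point blow-ups. Any proper birational morphism of smooth surfaces factors as a finite composition of blow-ups of points, and since $B$ is connected it cannot permute a finite set nontrivially; the equivariant Castelnuovo argument then shows that every $B$-equivariant proper birational morphism $\rho\colon Y\to X$ of smooth surfaces factors as a sequence of blow-ups at $B$-fixed points. Because $C+L$ fails to be a normal crossing divisor exactly at $P$, such a $\rho$ must be nontrivial over $P$, and as $P$ is the only $B$-fixed point of $X$ the first blow-up is forced to be $\Bl_P X \eqqcolon X_1$. I would then compute the induced $B$-action on the exceptional line $E_1=\Pd(T_PX)$ and find that it has a unique fixed point $Q$, namely the common tangent direction of $C$ and $L$; through $Q$ pass the three smooth curves $E_1$, $\widetilde C$ and $\widetilde L$, so $Q$ is a triple point and the total transform is still not a normal crossing divisor there. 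Since $Q$ is the only $B$-fixed point of $X_1$, the same reasoning forces the second blow-up to be $\Bl_Q X_1 \eqqcolon X_2$, and consequently every $B$-equivariant log-desingularization of $(X,C+L)$ dominates $X_2$.

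The heart of the argument is the linearisation of the $B$-action at $Q$. Working in the chart $b=1$ with coordinates $(a,c)$ and then in the blow-up chart $(a,m)$ with $c=am$, the adjoint action of $g=\left(\begin{smallmatrix} t & s\\ 0 & t^{-1}\end{smallmatrix}\right)$ becomes
\[ \tilde a=\frac{a(1+smt^{-1})}{t^2-2sta-s^2am},\qquad \tilde m=\frac{t^{-2}m}{1+smt^{-1}}, \]
and I would check that the differential at $Q=(0,0)$ equals the scalar matrix $t^{-2}\cdot\mathrm{Id}$, independently of $s$. Therefore $B$ acts trivially on the exceptional line $E_2=\Pd(T_QX_1)$ of the second blow-up, so $E_2$ consists entirely of $B$-fixed points. (One checks that a single further blow-up at $Q$ separates the three branches, so $X_2$ is in fact already a log-desingularization; the obstruction is thus not existence but the orbit count.) Since $X_2$ has infinitely many $B$-orbits, and since any $B$-equivariant log-desingularization $Y$ surjects $B$-equivariantly onto $X_2$ with orbits mapping onto orbits, $Y$ has infinitely many $B$-orbits as well. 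This proves that no $B$-equivariant log-desingularization of the set of $B$-invariant divisors has finitely many $B$-orbits.

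The main obstacle — and the conceptual point of the example — is the scalar linearisation at $Q$: it expresses that the tangency of the color $L$ to the $G$-invariant divisor $C$ has contact order exactly $2$, so that separating them requires a blow-up whose exceptional $\Pd^1$ the group $B$ cannot move. I expect the only genuinely delicate points to be the justification of the $B$-equivariant factorisation into blow-ups at fixed points (which is what forces the resolution through $X_1$ and $X_2$) and the verification that $Q$ is the unique $B$-fixed point at each stage; the tangency and the scalar Jacobian are then routine coordinate computations.
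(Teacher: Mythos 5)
Your proposal is correct and follows essentially the same route as the paper's proof: factor any $B$-equivariant resolution into blow-ups at $B$-fixed points, show the first two blow-ups are forced (at the tangency point $P$, then at the triple point $Q$ on the first exceptional curve), and conclude that $B$ acts trivially on the second exceptional curve, yielding infinitely many $B$-orbits. The only cosmetic difference lies in the last step: the paper deduces triviality of the $B$-action on the second exceptional $\Pd^1$ synthetically, from the three pairwise distinct invariant tangent directions at $Q$ (three fixed points on $\Pd^1$ force the connected group $B$ to act trivially), whereas you verify the same fact by computing that the Jacobian of the $B$-action at $Q$ is the scalar matrix $t^{-2}\cdot\mathrm{Id}$.
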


\begin{proof}
  It is easy to show that $X$ contains exactly two irreducible
  $B$-invariant divisors $D_0$ and $D_1$. The divisor
  $D_0 \cong \Pd^1$ is an $\operatorname{SL}(2)$-invariant conic in
  $\Pd^2$. It is the projectivization of the set of nilpotent matrices
  in $\mathfrak{sl}_2$. The $B$-invariant divisor $D_1 \cong \Pd^1$ is
  the tangent line to the conic $D_0$ at the $B$-fixed point
  $x_B \in D_0$.

  Assume that there exists a $B$-equivariant desingularization
  $\rho\colon Y \to X$ such that $Y$ has only finitely many $B$-orbits
  and the set of $B$-invariant divisors in $Y$ has simple normal
  crossings. Then the birational morphism $\rho\colon Y \to X$ between
  two smooth projective algebraic surfaces $Y$ and $X$ can be
  decomposed into a sequence of $B$-equivariant blow-downs
  $ Y \eqqcolon X_n \to X_{n-1} \to \cdots \to X_2 \to X_1 \to X_0
  \coloneqq X$.
  Moreover, every smooth projective algebraic surface $X_i$ must have
  finitely many $B$-orbits. Since $X_0$ has only one $B$-fixed point
  $x_0 \coloneqq D_0 \cap D_1$, the birational morphism
  $\rho_1\colon X_1 \to X_0$ must be the blow-up of this point. Then
  the $B$-variety $X_1$ contains three $B$-invariant divisors
  $\rho^{-1}(D_0)$, $\rho^{-1}(D_1)$, and $\rho^{-1}(x_0)$ having a
  common $B$-fixed point $x_1 \in X_1$. It is easy to show that $x_1$
  is the unique $B$-fixed point in $X_1$. Therefore, the birational
  $B$-equivariant morphism $\rho_2\colon X_2 \to X_1$ must be the
  blow-up of $x_1$. On the other hand, the tangent spaces of three
  divisors at the point $x_1$ are pairwise different. Hence, the group
  $B$ has at least $3$ pairwise distinct fixed points in the
  exceptional divisor $E \coloneqq \rho^{-1}(x_1) \subseteq X_2$. So
  $B$ must act trivially on $E$, \ie there exist infinitely many
  $B$-orbits in $X_2$, a contradiction.
\end{proof}

\bibliographystyle{amsalpha}
\bibliography{otasen}

\end{document}